\theoremstyle{plain}
 \newtheorem{theorem}{Theorem}[section]
 \newtheorem{lem}[theorem]{Lemma}
 \newtheorem{prop}[theorem]{Proposition}
 \theoremstyle{definition}
 \newtheorem{defi}[theorem]{Definition}
 \newtheorem{rem}[theorem]{Remark}
 \numberwithin{equation}{section}
\numberwithin{table}{section}
\newcommand{\Syl}{\operatorname{Syl}}
\newcommand{{\cS}}{{\mathcal S}}
\newcommand{{\cK}}{{\mathcal K}}
\newcommand{{\cN}}{{\mathcal N}}
\newcommand{{\cW}}{{\mathcal W}}
\newcommand{\rep}{\operatorname {rep}}
\newcommand{{\calf}}{{\mathrm f}}
\newcommand{{\calg}}{{\mathfrak g}}
\newcommand{{\calK}}{{\mathcal K}}
\newcommand{\w}{\widetilde}
\newcommand{\wG}{{\widetilde G}}\newcommand{\wbG}{{\widetilde \bG}}
\newcommand{\Gu}{{\w G}}
\newcommand{\la}{\ensuremath{\lambda}}
\renewcommand{\labelenumi}{(\alph{enumi})}
\renewcommand{\theenumi}{\thetheorem{}(\alph{enumi})}
\newcommand{\CC}{\ensuremath{\mathbb{C}}}
\renewcommand{\o}{\overline}
\newcommand{\Zent}{\ensuremath{{\rm{Z}}}}
\newcommand{\Cent}{\ensuremath{{\rm{C}}}}
\newcommand{\NNN}{\ensuremath{{\mathrm{N}}}}
\newcommand{\ep}{\epsilon}
\def\restr#1|#2{\left.#1\right\rceil_{#2}}
\newcommand{\FF}{\ensuremath{\mathbb{F}}}
\newcommand{\calD}{{\mathcal D}}
\newcommand{\SL}{{\mathrm{SL}}}
\newcommand{\PSL}{{\mathrm{PSL}}}
\newcommand{\GL}{{\mathrm{GL}}}
\newcommand{\SU}{{\mathrm{SU}}}
\newcommand{\PSU}{{\mathrm{PSU}}}
\newcommand{\tA}{\mathsf A}
\renewcommand{\calD}{\mathcal D}
\newcommand{\calG}{\ensuremath{\mathcal G}}
\newcommand{\Res}{{\mathrm{Res}}}
\newcommand{\bG}{{\bf G}}
\newcommand{\al}{{\alpha}}
\def \pseudosubsection#1 {\medskip\noindent  {\bf #1}   \smallskip}
\def\Spann<#1>{\Spann@h#1@}
\def\Spann@h#1|#2@{\left\langle\left.#1\vphantom{#2}\hskip.1em\right|\,\relax #2\right\rangle}
\def\Set#1{\Set@h#1@}
\def\Lset#1{\Lset@h#1@}
\def\Set@h#1|#2@{\left\{\left.#1\vphantom{#2}\hskip.1em\,\right|\,\relax #2\right\}}
\def\Bset@h#1in#2|#3@{\Set{{#1\qin#2}|{#3}}}
\def\Lset@h#1@{\left\{#1\right\}}
\def\spann<#1>{\left\langle#1\right\rangle}
\newcommand{\Inn}{\mathrm{Inn}}
\newcommand{\Aut}{\mathrm{Aut}}
\newcommand{\calM}{\ensuremath{\mathcal M}}
\newcommand{\calN}{\ensuremath{\mathcal N }}
\newcommand{\Z}{\operatorname Z}
\newcommand{\bC}{\mathbf C}
\newcommand{\bS}{\mathbf S}
\newcommand{\calP}{\mathcal P}
\newcommand{\calQ}{\mathcal Q}
\newcommand{\calO}{\mathcal O}
\newcommand{\Irr}{\operatorname{Irr}}
\global\long\def\ovG{\mathbf{G}}
\global\long\def\CC{\mathbb{C}}
\global\long\def\calD{\mathcal{D}}
\global\long\def\calK{\mathcal{K}}
\global\long\def\SL{\operatorname{SL}}
\global\long\def\CC{\mathbb{C}}
\global\long\def\FF{\mathbb{F}}
\global\long\def\NNN{\mathrm{N}}
\global\long\def\al{\alpha}
\newcommand{\lp}{{\ell '}}
\newcommand{\Ind}{{\mathrm{Ind}}}
\newcommand{\Omegau}{\w \Omega}
\newcommand{\Guchinull}{\w G_{\chi_0}}
\newcommand{\enumroman}{\renewcommand{\labelenumi}{(\roman{enumi})} \renewcommand{\theenumi}{\thetheorem(\roman{enumi})}}
\newcommand{\enumalph}{\renewcommand{\labelenumi}{(\alph{enumi})} \renewcommand{\theenumi}{\thetheorem(\alph{enumi})}}
\renewcommand{\to}{\rightarrow}
\newcommand{\with}{\text{ with }}
\renewcommand{\wbG}{{\w\ovG}}
\newcommand{\wh}{\widehat}
\newcommand{\bl}{\operatorname{bl}}
\newcommand{\Bl}{\operatorname{Bl}}
\newcommand{\oFF}{\o \FF}
\newcommand{\Cl}{\mathfrak{Cl}}
\newcommand{\und}{\text{ and }}
\newcommand{\forevery}{\text{ for every }}
\newcommand{\wM}{{\w M}}
\newcommand{\Mu}{{\w M}}
\newcommand{\tr}{\operatorname{tr}}
\newcommand{\wchi}{{\w\chi}}
\title[Inductive AM in type $\tA$]{On the inductive Alperin-McKay condition for simple groups of type $\tA$}
\begin{document}
\abstract{As a sequel to \cite{CabSpaeth2}, we verify the so-called inductive AM-condition introduced in \cite{Spaeth5} for simple groups of type $\tA$ and blocks with maximal defect. This is part of the program set up to verify the Alperin-McKay conjecture through its reduction to a problem on quasi-simple groups (see \cite{Spaeth_AM_red}) but also the missing direction of Brauer's height zero conjecture (see \cite{NavarroSpaeth}).}

\author{Marc Cabanes \and Britta Sp\"ath}
\address{M. Cabanes: Institut de Math\'ematiques de Jussieu, Universit\'e Paris 7, Batiment Sophie Germain,  75013 Paris, France.} \email{cabanes@math.jussieu.fr}

\address{B. Sp\"ath: Fachbereich Mathematik, TU Kaiserslautern, Postfach 3049, 67653 Kaiserslautern, Germany. }

\email{spaeth@mathematik.uni-kl.de}
\thanks{The second author has been supported by the Deutsche Forschungsgemeinschaft, SPP 1388  and  ERC Advanced Grant 291512.}}
\maketitle

\section{Introduction}
In \cite{CabSpaeth2} the authors proved that the inductive McKay condition from \cite{IMN} holds for the simple groups $S\in\{\PSL_n(q),\PSU_n(q)\}$, and all primes. Let us recall that, according to the main result of \cite{IMN}, this condition is to be satisfied by all finite quasi-simple groups to ensure that the McKay Conjecture holds for any finite group. Another inductive criterion, the so-called inductive AM-condition, was brought forth by the second author for the Alperin-McKay Conjecture (see \cite[Definition 7.2]{Spaeth_AM_red}), i.e., a version of McKay Conjecture dealing with blocks.

In the present paper, we show that this inductive AM-condition can be given a version relative to a fixed class of defect subgroups, that this relative version behaves well with regard to the criterion introduced in \cite[2.10]{Spaeth5} for finite groups of Lie type, and finally that it is satisfied by the blocks of maximal defect of quasi-simple groups of Lie type $\tA$. Our main result (see also Theorem~\ref{thm_PSL_ist_AM_gut+} below) can then be phrased as

\begin{theorem}\label{thm_PSL_ist_AM_gut}
The inductive AM-condition from Definition 7.2 of \cite{Spaeth_AM_red} holds for the simple groups $S\in\{\PSL_n(q),\PSU_n(q)\}$ with respect to their Sylow $\ell$-subgroups, where $\ell$ is any prime.
\end{theorem}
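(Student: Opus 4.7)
My plan follows the three-part program announced in the introduction. \textbf{First,} I isolate a relative version of \cite[Def. 7.2]{Spaeth_AM_red} in which one fixes an $\Aut(S)$-stable union $\calD$ of conjugacy classes of $\ell$-subgroups and asks all the structural data (equivariant bijection with matching projective extensions, Clifford-theoretic cocycle compatibility, central-character matching) only for blocks of the universal cover $G$ of $S$ whose defect groups lie in $\calD$. Since Definition 7.2 is already formulated block-by-block, the relative version for $\calD=\Syl_\ell(G)$ is equivalent to the statement of Theorem~\ref{thm_PSL_ist_AM_gut}.

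\textbf{Second,} the Lie-type criterion of \cite[2.10]{Spaeth5} must be upgraded to the relative setting. The original criterion reduces the inductive condition for $G=[\wbG^F,\wbG^F]$ to constructing, inside the ambient connected reductive $\wbG$ over $\oFp$ with Frobenius $F$, an $(\wbG^F\rtimes E)$-equivariant bijection between $\Irr(G)$ and $\Irr(\NNN_G(D))$ for a chosen $D\in\Syl_\ell(G)$, compatible with Clifford theory over $\wbG^F/G$ and carrying compatible extensions to inertia subgroups. The relative variant restricts this to characters lying in blocks whose defect group is in $\calD$ and demands compatibility with the Brauer correspondence; its proof is a bookkeeping of the argument in \cite{Spaeth5}, using Harris--Kn\"orr to keep track of block membership at each step.

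\textbf{Third,} I apply the relative criterion to $G\in\{\SL_n(q),\SU_n(q)\}$ embedded in $\wbG^F\in\{\GL_n(q),\GU_n(q)\}$ with $\calD=\Syl_\ell(G)$. When $\ell\mid q$, the Sylow $D$ is a Sylow of the unipotent radical of a Borel and $\NNN_G(D)=D\cdot\T_0$ for the associated maximally split torus; the bijection is then essentially the classical description of the principal block and a short adaptation of \cite{CabSpaeth2} suffices. When $\ell\nmid q$, let $d$ be the order of $q$ (resp.\ $-q$) modulo $\ell$; then $D$ is controlled by the normalizer of a Sylow $d$-torus $\bS$ of $(\wbG,F)$, whose relative Weyl group in types $\tA$ and $\tw2\tA$ is the complex reflection group $G(d,1,\lfloor n/d\rfloor)$. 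The equivariant McKay-type bijection for $\ell'$-characters was already built in \cite{CabSpaeth2} using explicit $\zeta$-extensions and a careful analysis of field- and graph-automorphism actions on $\Irr(\NNN_G(D))$; I would extend it to all characters in each maximal-defect block by a Jordan-decomposition argument reducing to Levi subgroups of $\wbG^F$ and invoking the Fong--Srinivasan parametrization of blocks and characters of $\GL_n(q)$ and $\GU_n(q)$.

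\textbf{Main obstacle.} I expect the hardest point to be the block-compatibility verification in Step 3 for $d>1$: one must show that the $(\Aut(S)\ltimes G)$-equivariant choices of extensions constructed in \cite{CabSpaeth2} are simultaneously Harris--Kn\"orr-compatible on $\NNN_G(D)$, and that the $2$-cocycle on $\Out(S)_\chi$ governing projective extensions to the inertia subgroup matches on both sides of $G\trianglelefteq \wbG^F$. For generic characters this should follow from degree and defect considerations, but for characters whose stabilizer picks up field-automorphism factors --- typically those in non-unipotent Lusztig series with non-toral semisimple centralizer --- an explicit cocycle comparison, parallel to but distinct from the one performed in \cite{CabSpaeth2}, seems unavoidable.
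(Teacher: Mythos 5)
Your three-part plan matches the paper's architecture at the top level (relative condition, upgraded Lie-type criterion, application to type $\tA$), but the proposal has two substantive gaps that make it fall short of a proof.

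First, you dismiss the upgrade of \cite[2.10]{Spaeth5} to the block setting as ``bookkeeping'' using Harris--Kn\"orr. This seriously underestimates the hard part. The inductive AM-condition (Definition~\ref{def_rel_AM}\ref{def_rel_AM_cohom_iv}) requires that one produce extensions $\w\chi$ and $\w\chi'$ to an auxiliary group $A$ such that the Brauer-induced blocks match for \emph{every} intermediate subgroup $\o G\le J\le A$; Harris--Kn\"orr alone gives compatibility only between $G$ and $\w G$. The mechanism the paper actually uses is Dade's ramification group $A[b]$ (Definition~\ref{H[b]}) together with the Clifford theory of covering blocks from \cite{KoshitaniSpaeth} (their Theorem C and Lemmas 2.4, 2.5, 3.7). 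This is the content of Lemma~\ref{red} and Proposition~\ref{H[b]2H}, which let one replace an extension $\w\chi$ by a block-compatible one $\w\chi_3$ along the solvable chain $\o G\le J[b]\le A[b]\le A$. Without this device your ``cocycle comparison'' in the Main Obstacle paragraph has no mechanism; matching factor sets does not by itself control $\la_{\bl}$ on class sums of all $\ell$-regular cosets.

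Second, you propose to ``extend [the bijection of \cite{CabSpaeth2}] to all characters in each maximal-defect block by a Jordan-decomposition argument \ldots invoking the Fong--Srinivasan parametrization.'' That is not what is required nor what the paper does: the inductive AM-condition only asks for a bijection on the height-zero sets $\Irr_0(G\mid D)$ and $\Irr_0(M\mid D)$, and the paper simply reuses the McKay bijection $\w\Omega$ from \cite[\S 6]{CabSpaeth2} and checks Brauer compatibility (Propositions~\ref{prop_block},~\ref{CGCJ},~\ref{prop3_5},~\ref{wG_b}), the key being the identity $\bC=\Cent_\bG(\Z(\bC)^F_\ell)$ and the consequent $\w G$-invariance of all relevant blocks. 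Finally, your proposal does not address the non-generic cases that the paper must handle separately: the eight groups with exceptional Schur multiplier (settled via Breuer's computations), $\PSL_2(q)$ at all primes (cyclic and generalized quaternion defect, Proposition~\ref{iAM2}), and $\PSL_3(\ep q)$ with $\ell=3$ and $\ep q\equiv 4,7\pmod 9$ (Proposition~\ref{iAM3}), where the Sylow normalizer is not governed by a Sylow $\Phi_d$-torus. A complete proof cannot omit these.
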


Proving that the bijection in \cite{CabSpaeth2} preserves $\ell$-blocks is relatively easy (see \cite[7.1]{CabSpaeth}) but the additional conditions required by the inductive AM-condition need a specific work. This is explained in Section \ref{sec2} below. We first give in Definition~\ref{def_rel_AM} our version of the inductive AM-condition relative to a single defect subgroup. We then show that it suffices to check it for every defect subgroup to get the inductive AM-condition. In passing we take the opportunity to give an equivalent property that may turn out easier to check than the one in \cite{Spaeth_AM_red}; in particular it uses only ordinary characters, no projective representations. In Section \ref{sec3}, we adapt the criterion introduced in \cite{Spaeth5} and already used in \cite{CabSpaeth2} by incorporating blocks to our situation.  Some of the methods make use of results from \cite{KoshitaniSpaeth}. Section \ref{sec_blocks} recalls and establishes some useful properties of groups of Lie type $\tA$. We conclude the proof of Theorem~\ref{thm_PSL_ist_AM_gut} in Section \ref{EndProof}.

{\bf Acknowledgement:} The first author
thanks the DFG Priority Program SPP 1388 and the ERC Advanced Grant 291512 for the support of his stays in Kaiserslautern. Both authors acknowledge the fine work atmosphere during the Journal of Algebra conference in Peking University.

\section{Reformulating the inductive AM-condition} \label{sec_notation_recall} \label{sec2}
In this section, after fixing some notation around characters and blocks of finite groups, we introduce an inductive condition (Definition~\ref{def_rel_AM}) relative to a class of defect groups. Proposition~\ref{prop2_2} shows that assumed for all defect groups it is equivalent to the inductive AM-condition from \cite[7.2]{Spaeth_AM_red}. We also show quickly how the condition from Definition~\ref{def_rel_AM} may lead to a reduction theorem relative to maximal defect (Proposition~\ref{prop2_6}).

\subsection{Notations}
For finite groups, their characters and blocks we use mainly the notations introduced in \cite{Isa} and \cite{Navarro}. 

Suppose a group $A$ acts on a finite set $X$. Then we denote by $A_{x}$ the stabilizer of $x\in X$ in $A$, analogously we denote by $A_{X'}$ the stabilizer of $X'\subseteq X$. 
If a group $A$ acts on a group $G$, there is a natural action of $A$ on $\Irr(G)$. We denote by $A_{P,\chi}$ the stabilizer of $\chi$ in $A_P$, where $P\leq G$ and $\chi$ is the character of some $A_P$-stable group. 

For $N\lhd G$ and $\chi\in \Irr(G)$ we denote by $\Irr(N\mid \chi)$ the set of irreducible constituents of the restricted character $\chi_N=\Res^G_N(\chi)$ while for $\psi \in \Irr(X)$ the set of irreducible constituents of the induced character $\Ind^X_N(\psi )$ is denoted by $\Irr(X\mid \psi)$. For a subset $\calN\subseteq \Irr(N)$ we set
\[ \Irr(G\mid \calN):=\bigcup_{\chi\in\calN}\Irr(G\mid \chi).\]

For $g\in G$, we denote by $\Cl_G(g)$ or simply $\Cl (g)$ the conjugacy class of $g$. We denote by $\Cl _G(g)^+$ the sum of elements of $\Cl_G(g)$ in any group ring of $G$.

 Let $\ell$ be a prime. Let $\calO$ be a complete valuation ring with $\calO/J(\calO )= \oFF_\ell$, the algebraic closure of the field $\FF_\ell$. We denote by $x\mapsto x^*$ the reduction map $\calO\to \oFF_\ell$.

Let $G$ be a finite group.
We denote by $\Bl(G)$ the set of $\ell$-blocks of $G$, that is the blocks of the group algebra $\oFF_\ell G$, and we recall the partition $\Irr (G)=\cup_{B\in\Bl (G)}\Irr (B)$. If $D$ is an $\ell$-subgroup of $G$, we denote by $\Bl (G\mid D)$ the set of $\ell$-blocks of $G$ having $D$ as defect group. Recall that $\Bl (G\mid D)$ is non empty only if $D$ is ($\ell$-) radical, that is $D={\rm O}_\ell (\NNN_G(D))$. Moreover let $\Irr (G\mid D)$ be the union of the $\Irr (B)$ for $B\in\Bl (G\mid D)$. We denote by $\Irr_0(G\mid D)$ the set of characters $\chi\in\Irr(G\mid D)$, such that $\chi (1)_\ell =|G:D|_\ell$. For $\chi\in\Irr(G)$ we write $\bl(\chi)$ for the $\ell$-block $b$ of $G$ with $\chi\in\Irr(b)$. For $B\in\Bl(G)$ we denote by \[\la_B:\Zent (\oFF_\ell G)\longrightarrow \oFF_\ell\]
the morphism associated to $B$ as in \cite[p. 48]{Navarro}. For $\chi\in\Irr(G)$ we denote $\la_\chi :=\la_{\bl(\chi)}$. Recall that if $g\in G$, the value of $\la_\chi$ at the sum of the conjugacy class of $g$ is $\bigl(|\Cl (g)|\chi (g)/\chi (1)\bigr)^*$ , the reduction mod $J(\calO )$ of the algebraic integer $|\Cl (g)|\chi (g)/\chi (1)$. 
Recall that $G_\lp$ denotes the set of $\ell$-regular elements of $G$.

 For $H\leq G$ and $b\in\Bl(H)$ we denote by $b^G$ the (Brauer-)induced block, defined as in \cite[p. 87]{Navarro}, when it exists.

\subsection{An inductive AM-condition relative to a defect subgroup}

The inductive AM-condition for a simple group and a prime $\ell$ has been introduced in Definition 7.2 of \cite{Spaeth_AM_red}. In \cite[Sect. 7.1]{CabSpaeth} a version relative to certain radical $\ell$-subgroups has been sketched.

\begin{defi}\label{def_rel_AM} Let $S$ be a finite non-abelian simple group and $\ell$ a prime. Let $G$ be the universal covering group of $S$ and $D$ be a non-central radical $\ell$-subgroup of $G$. Then we say that {\bf the inductive AM-condition holds for $S$ with respect to $D\Z(G)/\Z(G)$} if 
\begin{enumerate}
	\item there exists some $\Aut(G)_D$-stable subgroup $M$ such that $\NNN_G(D)\leq M\lneq G$
	\item there exists an $\Aut(G)_D$-equivariant bijection 
	\[ \Omega_D:\Irr_0(G\mid D) \longrightarrow \Irr_0(M\mid D),\]
	such that 
	\begin{itemize}
		\item $\Omega_D\bigl(\Irr_0(G\mid D)\cap \Irr(G\mid \nu)\bigr)\subseteq \Irr(M\mid \nu)$ for every $\nu\in\Irr(\Z(G))$,
		\item $\bl(\chi)=\bl\bigl(\Omega_D(\chi)\bigr)^G$  for every $\chi\in\Irr_0(G\mid D)$. 
	\end{itemize}
	\item \label{def_rel_AM_cohom}
	For every character $\chi\in\Irr_0(G\mid D)$ there exist a group $A$ and characters $\w\chi$ and $\w\chi'$ such that 
	\begin{enumerate}
		\item For $Z:=\ker(\chi)\cap \Z(G)$ and $\o G:=G/Z$ the group $A$ satisfies $\o G\lhd A$, $\Cent_A(\o G)=\Z(A)$ and $A/\Z(A)\cong \Aut(G)_\chi$,
		\item $\w\chi\in\Irr(A)$ is an extension of $\o \chi$, where $\o\chi\in \Irr(\o G)$ lifts to $\chi$,
		\item for $\o M:=MZ/Z$ and $\o D:=DZ/Z$ the character $\w\chi'\in\Irr\bigl(\o M\NNN_A(\o D)\bigr)$  is an extension of $\o\chi'$, where $\o\chi'\in\Irr(\o M)$ lifts to $\chi':=\Omega_D(\chi)\in\Irr(M)$,
		\item \label{def_rel_AM_cohom_iv}
		$\bl\bigl(\Res^A_J( \w\chi)\bigr)=\bl\bigl(\Res^{\o M\NNN_A(\o D)}_{\o M \NNN_J(\o D)}(\w\chi')\bigr)^J$ for every $J$ with $\o G\leq J \leq A$ ,
		\item $\Irr(\Z(A)\mid \w\chi)=\Irr(\Z(A)\mid \w\chi')$.
	\end{enumerate}
\end{enumerate}
\end{defi}
\begin{prop}\label{prop2_2}
Let $S$ be a non-abelian finite simple group, $G$ its universal covering group and $\ell$ a prime.
If the inductive AM-condition holds for $S$ with respect to every radical $\ell$-subgroup $D\leq S$, then $S$ satisfies the inductive AM-condition \cite[7.2]{Spaeth_AM_red} with respect to $\ell$. 
\end{prop}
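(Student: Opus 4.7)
The plan is to assemble the global inductive AM-condition of \cite[7.2]{Spaeth_AM_red} from the per-defect-group data furnished by Definition~\ref{def_rel_AM}. First I would fix a set of representatives of the $G$-conjugacy classes of radical $\ell$-subgroups of $G$, writing $\calR$ for the subset of non-central ones. Since every $\ell$-block of $G$ has a defect group, there is a decomposition $\Irr_0(G) = \bigsqcup_{D} \Irr_0(G\mid D)$, and blocks with central defect group ($D\leq \Z(G)$) have $\NNN_G(D)=G$, so the AM bijection can be taken to be the identity on the corresponding $\Irr_0(G\mid D)$, all cohomological requirements being automatic. Only the sets $\Irr_0(G\mid D)$ with $D\in \calR$ require nontrivial work.

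For each $D\in \calR$, the hypothesis provides a subgroup $M=M_D$ with $\NNN_G(D)\leq M\lneq G$ and a bijection $\Omega_D:\Irr_0(G\mid D)\to \Irr_0(M\mid D)$ with the properties of Definition~\ref{def_rel_AM}. To match the formulation of \cite[7.2]{Spaeth_AM_red}, which asks for a bijection onto characters of $\NNN_G(D)$, I would compose $\Omega_D$ with a Harris--Knörr type bijection $\Irr_0(M\mid D)\to \Irr_0(\NNN_G(D)\mid D)$ produced by the Brauer first main theorem applied along $\NNN_G(D)\leq M\leq G$. Gluing the resulting bijections over $D\in \calR$ and restricting to each block $B$ of defect $D$ yields the blockwise map $\Irr_0(B)\to \Irr_0(b)$ with $b$ the Brauer correspondent of $B$.

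The auxiliary properties of the global condition then follow: $\Aut(G)_B$-equivariance is inherited from the $\Aut(G)_D$-equivariance of each $\Omega_D$ together with the naturality of Harris--Knörr; preservation of central characters comes from the $\Irr(\Z(G)\mid \nu)$-clause; and the matching of Brauer correspondents from the equality $\bl(\chi)=\bl(\Omega_D(\chi))^G$. For the cohomological part, the key point advertised in the introduction is that the ordinary-character formulation of Definition~\ref{def_rel_AM}(3) is equivalent to the projective-representation/character-triple condition of \cite[7.2]{Spaeth_AM_red}, via Gallagher's theorem and the standard dictionary that turns an extension $\w\chi$ of $\o\chi$ to $A$ into a section of the associated projective representation of $A/\o G$; clause (3)(d), demanding $\bl(\Res^A_J(\w\chi))=\bl(\Res^{\o M\NNN_A(\o D)}_{\o M\NNN_J(\o D)}(\w\chi'))^J$ for all $J$ with $\o G\leq J\leq A$, then encodes precisely the blockwise compatibility of the resulting character triples.

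The main obstacle I expect is the descent step from $M$ to $\NNN_G(D)$. While the Brauer first main theorem takes care of the underlying block correspondence, carrying clause (3)(d) through this descent requires choosing the extension $\w\chi'$ so that its further restriction to the analogous subgroup built from $\NNN_G(D)$ still satisfies the block-induction identity for every intermediate $J$. In effect one must lift the Clifford/Harris--Knörr correspondence $\Irr_0(\NNN_G(D)\mid D)\to \Irr_0(M\mid D)$ to a correspondence of character triples compatible with every overgroup in $A$; this is where I expect the technical heart of the reduction to lie, the remaining ingredients following from standard Clifford theory.
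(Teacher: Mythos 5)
Your proposal rests on a misreading of the target definition. In \cite[7.2]{Spaeth_AM_red} the local subgroup is not $\NNN_G(D)$ but an $\Aut(G)_{D}$-stable subgroup $M$ with $\NNN_G(D)\leq M\lneq G$, exactly as in Definition~\ref{def_rel_AM}. This is why the paper can say that parts (a) and (b) transfer directly and that the \emph{only} discrepancy between the two definitions is in part (c). The entire ``descent step from $M$ to $\NNN_G(D)$'' that you flag as the technical heart of the argument does not exist. And if it did exist, the way you propose to fill it would not work: Brauer's first main theorem and Harris--Kn\"orr give a bijection on \emph{blocks} with defect group $D$, not on height-zero \emph{characters} in those blocks. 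A canonical bijection $\Irr_0(M\mid D)\to\Irr_0(\NNN_G(D)\mid D)$ preserving heights is precisely the content of the Alperin--McKay conjecture for $M$; assuming it here would be circular, since the whole point of the reduction is that AM has not yet been established for any group.

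What the proposition actually requires, and where the paper spends all of its effort, is showing that the ordinary-character formulation of condition (c) in Definition~\ref{def_rel_AM} (existence of the group $A$ together with compatible extensions $\w\chi$, $\w\chi'$ satisfying the block-induction identity (\ref{def_rel_AM_cohom_iv}) for every intermediate $J$) implies the projective-representation/factor-set formulation in \cite[7.2]{Spaeth_AM_red}. You wave this away in one sentence as ``Gallagher's theorem and the standard dictionary,'' but this is the substance of Lemma~\ref{lem2_4}: one must fix a $\Z(A)$-section $\rep$, form $\calP=\w\calD\circ\rep$ and $\calP'=\w\calD'\circ\rep$, verify that the two factor sets agree on $\wh M\Aut(G)_{D,\chi}$ by analysing the value of a common central character $\nu\in\Irr(\Cent_A(\o G)\mid\w\chi)=\Irr(\Cent_A(\o G)\mid\w\chi')$, and then derive the trace congruence \eqref{eq_calP_calP'} by comparing the central characters $\la_{\chi_1}$ and $\la_{\chi_1'}$ at the class sum of a suitable $\ell$-regular element $x'$ centralising $\o D$, using the block-induction hypothesis (\ref{def_rel_AM_cohom_iv}) and the defect-group computation of \cite[Lemma~4.16]{Navarro}. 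None of this is ``standard dictionary''; it is a concrete Brauer-homomorphism calculation. Your proposal skips the one genuinely nontrivial step and invents a different, unworkable one in its place.
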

The inductive AM-condition splits naturally into conditions with respect to radical $\ell$-subgroups. Nevertheless part (c) in Definition \ref{def_rel_AM} given above is different from the part (c) required in Definition 7.2 of \cite{Spaeth_AM_red}. For the proof of the proposition it is sufficient  to show the equivalence between those two versions. 

In order to prove this we need some terminology relevant for the inductive AM-condition as stated in \cite{Spaeth_AM_red}. We follow the usual conventions about projective representations (see \cite[Ch. 11]{Isa}). The trace of an endomorphism $v$ of a vector space is denoted by $\tr (v)$.

When $X_1\lhd X_2$, a $X_1${\bf -section} $\rep:  X_2/X_1\to X_2$ is any (set theoretic) section of the factor map $X_2\to X_2/X_1$ satisfying $\rep (1_{X_2/X_1})=1_{X_2}$. If moreover $X_1$ is {\em central} in $X_2$, and $\chi\in\Irr (X_2)$, a projective representation $\calQ $ of $X_2/X_1$ is said to be {\bf obtained from $\chi$ using $\rep$} if one has $\calQ = \calD\circ \rep$ where $\calD$ is a (linear) representation of $X_2$ affording the character $\chi$.

In view of \cite[7.2]{Spaeth_AM_red}, the proof of Proposition~\ref{prop2_2} clearly amounts to check the following

\begin{lem}\label{lem2_4}
Let $S$, $G$, $\ell$ and $D$ be as in Definition \ref{def_rel_AM}. Assume that $M $ is an $\Aut(G)_D$-stable subgroup of $G$ with $M \geq \NNN_G(D)$.
 Let $\chi\in\Irr_0(G\mid D)$ and $\chi'\in\Irr(M \mid D)$, such that there exist a group $A$ and characters $\w\chi$ and $\w\chi'$ satisfying the conditions of Definition~\ref{def_rel_AM_cohom}. Denote $\wh M :=M /\Z(G)$, a subgroup of $\Aut (G)$. Recall $Z:=\ker(\chi)\cap \Z(G)$, $\o G:=G/Z\lhd A$ and its subgroups $\o D:=D Z/Z$, $\o M :=M /Z$.

Let $\rep\colon  \Aut(G)_\chi \rightarrow A$ be a  $\Z (A)$-section of the canonical epimorphism $A\rightarrow \Aut(G)_\chi$. Let $\w\calD$ and $\w\calD'$ be linear representations of $A$ and $\NNN_A(\o D)$ affording $\w\chi$ and $\w\chi'$. 
Then the projective representations $\calP:=\w\calD\circ \rep$ and $\calP':=\w\calD' \circ \rep_{\wh M \NNN_A(\o D)/\Z (A)}$ satisfy the following statements 
\begin{itemize}
\item $\Res^{ \Aut(G)_\chi}_{S}(\calP )$ is obtained from $\chi$ using $\rep_{S}$, 
\item $\Res_{\wh M }^{\wh M \NNN_A(\o D)/\Z (A)}(\calP' )$ is obtained from $\chi'$ using $\rep_{\wh M  }$,
\item the factor sets $\al$ of $\calP$ and $\al'$ of $\calP'$ satisfy
\begin{equation}\label{eq_ind_AM_factor_set} 
\al' (a,a')=\al (a,a') \text{ for every }  a,a'\in  \wh M  \Aut(G)_{ D,\chi},
\end{equation} 
and
\item for every $x\in ( \wh M  \Aut(G)_{D,\chi})_{\ell'}$ with $D\Z(G)/\Z(G)\in \Syl_\ell\bigl(\Cent_{G/\Z(G)}(x)\bigr)$ we have 
\begin{equation}\label{eq_calP_calP'} \epsilon_\chi^*\tr\bigl(\calP(x)\bigr)^* =\tr\bigl(\calP'(x)\bigr)^*\end{equation}
where $ \epsilon_\chi\in \Lset{1,\ldots , \ell -1}$ is defined by $\epsilon_\chi \chi(1)_{\ell '} \equiv |G:M |_{\ell '  }\chi '(1)_{ \ell' } \mod \ell$.
\end{itemize}
\end{lem}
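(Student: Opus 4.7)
The first two bullets are essentially tautological from the construction. Since $\rep_S$ takes values in the preimage of $S$ in $A$, which up to $\Z(A)$-scalars is $\o G$, and $\w\calD|_{\o G}$ is a linear representation of $\o G$ affording $\o\chi$ (which lifts to $\chi$), the projective representation $\calP|_S$ is obtained from $\chi$ using $\rep_S$; the same reasoning applies to $\calP'$ via $\w\calD'|_{\o M}$ and $\o\chi'\leftrightarrow \chi'$. For the third bullet, the $\Z(A)$-section $\rep$ produces a cocycle $z(a,a'):=\rep(a)\rep(a')\rep(aa')^{-1}\in \Z(A)$, so that $\alpha(a,a')=\omega_{\w\chi}(z(a,a'))$ for the central character $\omega_{\w\chi}$ of $\w\chi$ on $\Z(A)$, and likewise $\alpha'(a,a')=\omega_{\w\chi'}(z(a,a'))$ whenever $a,a'\in \wh M\Aut(G)_{D,\chi}$ (so that the relevant section values lie in $\o M\NNN_A(\o D)$, the preimage of $\wh M\Aut(G)_{D,\chi}$ in $A$). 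Condition~(e) of Definition~\ref{def_rel_AM_cohom} forces $\omega_{\w\chi}=\omega_{\w\chi'}$ on the abelian group $\Z(A)$, giving~\eqref{eq_ind_AM_factor_set}.

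For the trace identity~\eqref{eq_calP_calP'}, set $y:=\rep(x)\in A$ and $J:=\o G\langle y\rangle\le A$, so that $J/\o G$ is cyclic of $\ell'$-order. Because $\w\chi|_{\o G}=\o\chi$ is irreducible, $\Res^A_J(\w\chi)$ is itself irreducible, and similarly for $\Res^{\o M\NNN_A(\o D)}_{\o M\NNN_J(\o D)}(\w\chi')$; thus $b_J:=\bl(\Res^A_J(\w\chi))$ and $b'_J:=\bl(\Res(\w\chi'))$ are well-defined blocks, and condition~(iv) of Definition~\ref{def_rel_AM_cohom} gives $b'_J{}^J=b_J$. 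The assumption $\o D\in\Syl_\ell(\Cent_{\o G}(x))$ combined with $J=\o G\langle y\rangle$ and $y$ of $\ell'$-order implies $\o D\in\Syl_\ell(\Cent_J(y))$, hence also $\o D\in\Syl_\ell(\Cent_{\o M\NNN_J(\o D)}(y))$. Applying Brauer's central-character formulation of block induction (see e.g.\ \cite[Ch.~5]{Navarro}) together with the formula $\lambda_\psi(\Cl(g)^+)=(|\Cl(g)|\psi(g)/\psi(1))^*$ applied to $\w\chi$ and $\w\chi'$, one obtains
\[
 \Bigl(\tfrac{|\Cl_J(y)|\,\w\chi(y)}{\w\chi(1)}\Bigr)^{\!*}
 \;=\;
 \Bigl(\tfrac{|\Cl_{\o M\NNN_J(\o D)}(y)|\,\w\chi'(y)}{\w\chi'(1)}\Bigr)^{\!*}.
\]

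Rearranging this identity, and using $\w\chi(1)=\chi(1)$, $\w\chi'(1)=\chi'(1)$ together with the degree equations $\chi(1)_\ell=|G:\o D|_\ell$ and $\chi'(1)_\ell=|M:\o D|_\ell$ supplied by $\Irr_0$, one finds that the ratio of centralizer $\ell'$-parts multiplied by $\chi'(1)/\chi(1)$ reduces modulo $\ell$ to the scalar $\epsilon_\chi$ prescribed by its defining congruence $\epsilon_\chi\chi(1)_{\ell'}\equiv |G:M|_{\ell'}\chi'(1)_{\ell'}\pmod \ell$; this yields~\eqref{eq_calP_calP'}. The main obstacle is precisely this final arithmetic step: tracking the $\ell$- and $\ell'$-parts of the centralizer orders $|\Cent_J(y)|$ and $|\Cent_{\o M\NNN_J(\o D)}(y)|$, of the group indices, and of the degrees so that they combine into exactly $\epsilon_\chi$. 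Once the Sylow condition on $\o D$ is exploited to equate the $\ell$-parts of both centralizers with $|\o D|$, the remaining check is routine and mirrors the analogous computation carried out in \cite[Sec.~2]{KoshitaniSpaeth}.
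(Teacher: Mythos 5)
The first three bullets are handled correctly and in essentially the same way as the paper; for the factor sets, the paper uses $\nu\in\Irr(\Cent_A(\o G)\mid\w\chi)=\Irr(\Cent_A(\o G)\mid\w\chi')$, which is exactly your central character $\omega_{\w\chi}=\omega_{\w\chi'}$ on $\Z(A)=\Cent_A(\o G)$.

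The fourth bullet, however, has a genuine gap. You set $y:=\rep(x)$ and assert that $J/\o G$ is cyclic of $\ell'$-order, then use the Sylow hypothesis on $\o D$ and feed $y$ into the Brauer central-function formula. But $\rep$ is an arbitrary $\Z(A)$-section, so $\rep(x)$ is only guaranteed to be $\ell'$ modulo $\Z(A)$: if $\Z(A)$ has nontrivial $\ell$-part, $\rep(x)$ (and hence $y\o G$) can have $\ell$-part, and then neither ``$J/\o G$ is $\ell'$-cyclic'' nor the subsequent Sylow/defect-group reasoning is valid. Likewise, nothing forces $\rep(x)$ to lie in $\NNN_A(\o D)$, which you need even to restrict $\w\chi'$ along $\o M\NNN_J(\o D)$ and evaluate it at $y$. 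The paper's proof circumvents both problems by replacing $\rep(x)$ with an $\ell'$-element $x'\in\rep(x)\Z(A)\cap A_{\ell'}$ chosen inside $\Cent_A(\o D)$, and then comparing $\w\chi(x')$ and $\w\chi'(x')$; at the end the central scalar relating $x'$ to $\rep(x)$ is divided out via $\tr(\calP(x))^*=(\nu(c)^{-1}\w\chi(x'))^*$ (and the same for $\calP'$). You will need this replacement to make your argument work.

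A second point, not fatal but worth naming: the step ``applying Brauer's central-character formulation of block induction\ldots one obtains'' hides the key technical identity. To pass from $\bl(\chi_1')^{J_2}=\bl(\chi_1)$ to the displayed equality of $\la$-values at $x'$, one must know that the relevant conjugacy classes of $x'$ in $\spann{\o G,x'}$ and in $\spann{\o M,x'}$ intersect $\NNN_{\spann{\o G,x'}}(\o D)$ in the same set, namely $\Cl_{\NNN_{\spann{\o G,x'}}(\o D)}(x')$. This uses precisely that $\o D$ is a defect group for the class of $x'$ (Navarro's Lemma~(4.16)), which in turn requires the Sylow hypothesis on $\o D\in\Syl_\ell(\Cent(x))$ and the $\ell'$-ness of $x'$. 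That is the substantive use of the hypothesis, and it should not be folded into a parenthetical reference; as written your final arithmetic ``routine check'' is carried by an unstated lemma.
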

\begin{proof} 
Let $a,a'\in \wh M  \Aut(G)_{D,\chi} $. By the definition of $\rep$ there exists some $c\in \Cent_A(\o G)$ with $\rep(a)\rep(a')=c\rep(aa')$. 
Let $\nu\in\Irr(\Cent_A(\o G)\mid \w\chi )=\Irr(\Cent_A(\o G)\mid \w\chi' )$ (by assumption both sets coincide). 
By the definition of $\calP$ and $\alpha$ as the factor set of $\calP$ we have
$\al(a,a')=\nu(c)$.

One can argue analogously for the factor set $\al '$ of $\calP'$ and we get the same formula $\al '(a,a')=\nu (c)$. This proves Equation (\ref{eq_ind_AM_factor_set}).

Let $x\in (\wh M \Aut(G)_{D,\chi})_{\ell'}$ with $D\Z(G)/\Z(G)\in \Syl_\ell(\Cent_{G/\Z(G)}(x))$. 
Then by elementary group theory there exists some $x'\in (\rep(x) Z)\cap A_{\ell'}$ with $x'\in \Cent_A(\o D)$. Let us consider $\chi_1:=\Res_{\spann<\o G, x'>}^{A}({\w\chi})$ and  $\chi_1 ':=\Res_{\spann<\o M , x'>}^{\o M\NNN_A(\o D)}({\w\chi'})$. Note that the last character is well-defined since $x'$ centralizes $\o D$. By assumptions the blocks to which the characters $\chi_1$ and $\chi_1'$ belong have a common Brauer correspondent. 
By the definition of Brauer correspondence using central functions (see \cite{Navarro} p.87) we have 
\[ \la_{\chi_1}(\Cl_{\spann<\o G, x'>}(x')^+)^*=
\la_{\chi_1'}(\Cl_{\spann<\o M , x'>}(x')^+)^*.\]
Note that this relies on the fact that \[\Cl_{\spann<\o G, x>}(x') \cap \NNN_{\spann<\o G,x'>}(\o D)= \Cl_{\NNN_{\spann<\o G,x'>}(\o D)}(x')=
\Cl_{\spann<\o M , x'>}(x') \cap \NNN_{\spann<\o G,x'>}(\o D),\]
which in turn is a consequence of the fact that $\o D$ is the defect group of the classes containing $x'$, see also Lemma (4.16) of \cite{Navarro}. Let $\epsilon_{\chi}\in \Lset{1,\ldots , \ell -1}$ with \[ \epsilon_\chi \chi(1)_{\ell '} \equiv |G:M |_{ \ell ' }\chi ' (1)_{ \ell' } \mod \ell .\] Then the definition of $ \la_{\chi_1}$ and $
\la_{\chi_1'}$ implies 
\begin{equation} \epsilon_\chi^*\w \chi(x')^* =\w\chi'(x')^*.\end{equation}
Let $c\in\Cent_A(\o G)$ with $x'=c\rep(x)$. Note that  $\tr(\calP(x))^*= (\nu(c)^{-1}\w\chi(x'))^*$ and  $\tr(\calP'(x))^*= (\nu(c)^{-1}\w\chi'(x'))^*$. Altogether this implies Equation \eqref{eq_calP_calP'}.
\end{proof}

\begin{rem} \label{rem2_5}
 The converse of Lemma~\ref{lem2_4} is true. This can be proved using classical considerations on the Darstellungsgruppe associated to a projective representation, see the proof of \cite[11.28]{Isa} or \cite[8.28]{Navarro}. 
\end{rem}

\subsection{Inductive AM-condition for maximal defect}

It now seems natural to ask if a reduction theorem for the Alperin-McKay Conjecture relative to the class of Sylow $\ell$-subgroups  is possible. Then one considers the blocks $B$ of a group $X$ whose defect groups are Sylow $\ell$-subgroups of $X$. Those blocks are called {\bf blocks with maximal defect} in what follows. For every group $H$ we call any group $H_1/H_2$ for $H_2\lhd H_1\leq H$ a {\bf subquotient of $H$}.

We introduce a reduction theorem of the Alperin-McKay Conjecture for blocks with maximal defect. 

\begin{prop} \label{prop2_6}
Let $X$ be a finite group and $\ell$ be a prime. Assume that every non-abelian simple subquotient $S$ of $X$ with $\ell\mid |S|$ satisfies the inductive AM-condition from Definition~\ref{def_rel_AM} with respect to some Sylow $\ell$-subgroup of $S$. Then the Alperin-McKay Conjecture holds for any $\ell$-block $B$ of $X$ with maximal defect, i.e., $ |\Irr_0(B)|=|\Irr_0(b)|$, where $b$ is a Brauer correspondent of $B$.
\end{prop}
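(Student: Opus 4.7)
The approach is to argue by induction on $|X|$, following the strategy of the reduction theorem in \cite{Spaeth_AM_red} while tracking carefully that only Sylow $\ell$-subgroups arise as defect groups throughout the descent. Fix an $\ell$-block $B$ of $X$ with defect group $D\in\Syl_\ell(X)$, and let $b$ be a Brauer correspondent of $B$ in $\NNN_X(D)$. The aim is to construct an $X$-equivariant bijection $\Irr_0(B)\to\Irr_0(b)$, from which the equality $|\Irr_0(B)|=|\Irr_0(b)|$ follows.

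The crucial observation that makes the hypothesis on Sylow $\ell$-subgroups alone sufficient is the stability of the maximal-defect property under Clifford-theoretic descent: if $N\lhd X$ and $b_N\in\Bl(N)$ is covered by $B$, then a defect group of $b_N$ is $X$-conjugate to $D\cap N\in\Syl_\ell(N)$, hence of maximal defect in $N$; likewise the Fong--Reynolds correspondent in $\Bl(X_{b_N})$ has Sylow defect group. Thus every block and every defect subgroup encountered during the induction is of maximal defect in its ambient subquotient. Routine reductions via Fong--Reynolds and Clifford theory then allow one to assume that each minimal normal subgroup $N$ covered by $B$ is either abelian (where standard character extension arguments dispatch both the $\ell$ and the $\ell'$ cases) or a direct product of isomorphic non-abelian simple groups.

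The essential case is $N=S_1\times\cdots\times S_k$ with the $S_i$ pairwise isomorphic to some non-abelian simple $S$ satisfying $\ell\mid |S|$. The transitive permutation action of $X$ on the components, combined with Clifford theory, reduces the construction to the universal cover $G$ of a single component $S$. By hypothesis the inductive AM-condition of Definition~\ref{def_rel_AM} holds for $S$ with respect to some Sylow $\ell$-subgroup; applied at the level of $G$ this supplies a subgroup $M\geq\NNN_G(D_0)$, the bijection $\Omega_{D_0}\colon\Irr_0(G\mid D_0)\to\Irr_0(M\mid D_0)$ for the relevant $D_0\in\Syl_\ell(G)$, and the extensions $\w\chi,\w\chi'$ with their block- and central-character compatibilities. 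Via the projective representation reformulation from Lemma~\ref{lem2_4} one lifts these data to $X$-equivariant data on the $k$-fold product, and then to the required bijection $\Irr_0(B)\to\Irr_0(b)$ by the standard assembly on direct products developed in \cite{Spaeth_AM_red}.

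The main obstacle will be this last assembly step: globally combining the component-wise data into a single bijection which simultaneously respects $X$-conjugation, preserves central characters, and satisfies the Brauer correspondence $\bl\bigl(\Omega_D(\chi)\bigr)^X=\bl(\chi)$. This is where the factor-set identity \eqref{eq_ind_AM_factor_set} and the trace identity \eqref{eq_calP_calP'} of Lemma~\ref{lem2_4} enter decisively, ensuring that the block-theoretic compatibility survives all the central extensions and permutation-type twists produced by the Clifford analysis.
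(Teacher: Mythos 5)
Your high-level strategy (induction on $|X|$, descent along the generalized Fitting subgroup, tracking that the maximal-defect property survives every step) is exactly the one the paper takes, and your "crucial observation" — that if $D\in\Syl_\ell(X)$ is a defect group of $B$ and $b_N\in\Bl(N)$ is covered by $B$, then the defect groups of $b_N$ are Sylow $\ell$-subgroups of $N$ (Kn\"orr's theorem, \cite[9.26]{Navarro}), and likewise the Fong--Reynolds correspondent retains Sylow defect — is indeed the reason a hypothesis on Sylow $\ell$-subgroups alone can suffice. That much is right and is made explicit in the paper.

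There are two places where the proposal misses the actual content of the proof. First, what you call "routine reductions via Fong--Reynolds and Clifford theory" is where the paper does genuine work: it adapts Murai's Proposition~9 into Proposition~\ref{prop2_7}, which yields the dichotomy that either AM already holds for $B$, or one is in a very constrained situation ($X=K\NNN_X(D)$ for every non-central normal $K$, $B$ covers an $X$-invariant maximal-defect block of $K$, and $X=\NNN_X(D)\operatorname{F}^*(X)$). The subsequent case split — $K\cap D\leq\Z(X)$ (dispatched by \cite[6.6]{Spaeth_AM_red}) versus Fitting subgroup central and $\operatorname{E}(X)$ non-central (dispatched by the maximal-defect versions of \cite[7.7, 7.9]{Spaeth_AM_red}) — is essential and absent from your sketch; you collapse both into one vague "abelian or product of simples" trichotomy without identifying which preexisting results need adaptation. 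Second, your final paragraph misidentifies the bottleneck: the factor-set identity \eqref{eq_ind_AM_factor_set} and trace identity \eqref{eq_calP_calP'} are the content of Lemma~\ref{lem2_4}, whose role is to prove Proposition~\ref{prop2_2} (equivalence of the two formulations of part (c)). The proof of Proposition~\ref{prop2_6} does not run through that machinery at all; its hard technical step is the Murai-style Proposition~\ref{prop2_7} and the observation that Propositions~7.7 and 7.9 of \cite{Spaeth_AM_red} go through unchanged once one knows all blocks arising have maximal defect. So the proposal is directionally correct but would not compile into a proof without supplying the dichotomy of Proposition~\ref{prop2_7} and redirecting the appeal away from Lemma~\ref{lem2_4} toward the maximal-defect adaptations of \cite{Murai04} and \cite{Spaeth_AM_red}.
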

This is almost a special case of Theorem C of \cite{Spaeth_AM_red} and it can be proven along the same steps with some necessary adaptations.
We first adapt Proposition 9 of \cite{Murai04}.
\begin{prop}\label{prop2_7} Keep $X$ a finite group and $\ell$ a prime. Let $D$ a Sylow $\ell$-subgroup of $X$ and $B\in \Bl (X\mid D)$. Suppose
that the Alperin-McKay Conjecture is true  for any block with maximal defect of any group $H$ with $|H : \Z(H)| < |X : \Z(X)|$ and such that $H$ is isomorphic to a subquotient of $X/\Z(X)$. 
Then one of the following two cases holds: 
\enumroman
\begin{enumerate}
\item The Alperin-McKay Conjecture holds for $B$.
\item \label{prop2_7ii}
For any non-central normal subgroup $K$ of $X$ we have $X=K\NNN_X(D)$ and $B$ covers an $X$-invariant block of $K$ with maximal defect, in particular $X=\NNN_X(D)\operatorname{F}^*(X)$ for the generalized Fitting subgroup $\operatorname{F}^*(X)$ of $X$.
\end{enumerate}
\enumalph
\end{prop}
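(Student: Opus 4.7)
The plan is to follow the template of \cite[Prop.~9]{Murai04}, adapted to the setting of blocks of maximal defect. I would assume that (i) fails, i.e., the Alperin--McKay Conjecture does not hold for $B$, and derive (ii) by establishing both assertions for every non-central normal subgroup $K\lhd X$.

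First I would fix such a $K$ and choose a block $b_0\in\Bl(K)$ covered by $B$. By the standard theorem on defect groups of covered blocks (Kn\"orr), after replacing $b_0$ by a suitable $X$-conjugate one may arrange that $D_0:=D\cap K$ is a defect group of $b_0$ and that $D\leq T:=X_{b_0}$. Since $D\in\Syl_\ell(X)$ and $K\lhd X$, $D_0$ is a Sylow $\ell$-subgroup of $K$, so $b_0$ has maximal defect; moreover $D\in\Syl_\ell(T)$. Next I would invoke the Fong--Reynolds correspondence to produce a block $B'\in\Bl(T\mid D)$ satisfying $|\Irr_0(B)|=|\Irr_0(B')|$, together with an analogous bijection for Brauer correspondents in $\NNN_X(D)=\NNN_T(D)$, so that AM for $B$ is equivalent to AM for $B'$. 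Since $\Z(X)$ acts trivially on $K$ and hence fixes every block of $K$, one has $\Z(X)\leq T$ and thus $\Z(X)\leq \Z(T)$. If $T\lneq X$, the estimates
\[
|T:\Z(T)|\,\leq\,|T:\Z(X)|\,<\,|X:\Z(X)|
\]
together with the fact that $T/\Z(T)$ is a subquotient of $X/\Z(X)$ allow applying the induction hypothesis to get AM for $B'$, hence for $B$---a contradiction. Therefore $T=X$, meaning $b_0$ is $X$-invariant; this already delivers the second assertion of (ii).

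It remains to prove $X=K\NNN_X(D)$. The usual Frattini argument (for $D_0$ a Sylow of $K$ and $K\lhd X$) immediately gives $X=K\NNN_X(D_0)$. Setting $L:=\NNN_X(D_0)$, one has $D\leq L$ (since $D$ normalizes $D\cap K=D_0$) and $D\in\Syl_\ell(L)$. The hard part will be to upgrade this to $X=K\NNN_X(D)$: my plan is to run a block-theoretic Frattini argument inside $L$, exploiting the $X$-invariance of $b_0$ through Brauer's First Main Theorem applied to the correspondent $b_0'\in\Bl(\NNN_K(D_0)\mid D_0)$, which is $X$-invariant and corresponds to a character of the $\ell'$-group $\NNN_K(D_0)/D_0$; this mirrors the analogous step of \cite[Prop.~9]{Murai04}. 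Finally, for the ``in particular'' clause: if $\operatorname{F}^*(X)\leq\Z(X)$, then $X=\Cent_X(\operatorname{F}^*(X))\leq\operatorname{F}^*(X)\leq\Z(X)$, making $X$ abelian and AM trivially true---contrary to our standing assumption---so $\operatorname{F}^*(X)$ is non-central and one applies the already-established identity with $K=\operatorname{F}^*(X)$.
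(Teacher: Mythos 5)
Your route differs from the paper's in a substantive way. The paper applies an adapted version of Murai's Theorem~6 to establish directly that $|\Irr_0(B)| = |\Irr_0(b_Y)|$ for $b_Y\in\Bl\bigl(K\NNN_X(D)\bigr)$ with $b_Y^X=B$; since $\NNN_{K\NNN_X(D)}(D)=\NNN_X(D)$, the inductive hypothesis applied to $K\NNN_X(D)$ (when proper) then yields Alperin--McKay for $B$, giving $X=K\NNN_X(D)$ in case (ii) and also the invariance statement via the argument of \cite[Prop.~9(ii)]{Murai04}. You instead work with the inertia group $T=X_{b_0}$ and a Fong--Reynolds reduction. That is a legitimate and clean way to establish $X$-invariance of $b_0$, but it does not come with $\NNN_T(D)=\NNN_X(D)$ for free, and it leaves the identity $X=K\NNN_X(D)$ as a separate problem.

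Two concrete gaps. First, $\NNN_X(D)=\NNN_T(D)$ is false in general: $\NNN_X(D)$ permutes the $K$-blocks with defect $D\cap K$ covered by $B$ and need not stabilize $b_0$. One can repair this by applying Harris--Kn\"orr together with Fong--Reynolds at the level of normalizers (using that $\NNN_T(D)$ is the stabilizer in $\NNN_X(D)$ of the Brauer correspondent $e_0$ of $b_0$ in $\NNN_K(D_0)$), but as written the claim is incorrect. Second, and more seriously, the passage from $X=K\NNN_X(D_0)$ to $X=K\NNN_X(D)$ is only a sketch. The intended contradiction (assuming $Y:=K\NNN_X(D)\lneq X$) requires showing $|\Irr_0(B)|=|\Irr_0(b_Y)|$ for the block $b_Y$ of $Y$ with $b_Y^X=B$; this is precisely the non-trivial content of Murai's Theorem~6 and does not follow from the $X$-invariance of $b_0$ together with a Frattini argument or from the description of $b_0'$ as a block of $\NNN_K(D_0)$ with normal Sylow defect group. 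As it stands, the first assertion of (ii) is not proven, which is the heart of the proposition; the ``in particular'' clause and the maximal-defect observation are fine.
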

\begin{proof}
Let $K\lhd X$ be a non-central normal subgroup of $X$. The proof of Theorem 6 of \cite{Murai04} can be adapted and proves that by the assumption $|\Irr_0(B)|=|\Irr_0(b)|$ where $b\in\Bl(K\NNN_X(D))$ is the block with $b^X=B$. This implies $X=K\NNN_X(D)$. The considerations in the proof of Proposition 9(ii) in \cite{Murai04} can be applied to the block $B$ with maximal defect and in this case it only uses the assumptions on blocks with maximal defect given here. Moreover by Theorem (9.26) of \cite{Navarro} the block $B$ covers only blocks with maximal defect. Since the generalized Fitting subgroup $\operatorname{F}^*(X)$ is a non-central normal subgroup of $X$ it automatically satisfies $X=\NNN_X(D)\operatorname{F}^*(X)$.
\end{proof}

\renewcommand{\proofname}{Proof of Proposition \ref{prop2_6}}
\begin{proof}  {\em (Sketch)} 
According to Proposition \ref{prop2_7} we may assume that $X$ and $B$ satisfy all the statements given in \ref{prop2_7ii}. As consequence, any normal $p$-subgroup of $X$ is central.
As in Section 6 and 7 of \cite{Spaeth_AM_red} we distinguish two cases. 

Assume that there exists a normal non-central subgroup $K\lhd X$ with $K\leq \operatorname{F}^*(X)$. Let $D$ be some Sylow $\ell$-subgroup of $X$. If $K\cap D\leq \Z(X)$ then there exists a bijection between $\Irr_0(B)$ and $\Irr_0(b)$ according to \cite[6.6]{Spaeth_AM_red} where $b\in\Bl(\NNN_X(D))$ with $b^X=B$. 

Assume otherwise that the Fitting subgroup is central in $X$, and $\operatorname{E}(X)$, the group of components, is non-central. Inside $\operatorname{E}(X)$ we can take a normal subgroup $K\lhd X$ such that $K=[K,K]$ and $K/\Z(K)\cong S^r$ for a non-abelian simple group $ S$ and an integer $r\geq 1$. Since we assume that there exists no normal non-central subgroup $K_1\lhd X$ with $\ell\nmid |K_1 :(\Z(X)\cap K_1)|$ we can conclude that $\ell\mid |S|$. Note that $B$ covers a block of $K$ with maximal defect. 

By assumption, $S$ satisfies the inductive AM-condition with respect to the Sylow $\ell$-subgroups of $S$. According to its proof, the statement of Proposition 7.7 of \cite{Spaeth_AM_red} holds then for blocks of $\w G$ with maximal defect, where $\w G$ is the universal covering group of $K/\Z(K)\cong  S^r$. 
Furthermore we can conclude that also the statement of Theorem 7.9 of \cite{Spaeth_AM_red} holds for blocks with maximal defect, in particular it proves that we can apply it to our $B\in\Bl(X)$.
As in the proof of Theorem C of \cite{Spaeth_AM_red} (page 183) we can conclude that $|\Irr_0(B)|=|\Irr_0(b)|$, where $b\in\Bl(\NNN_X(K\cap D))$ is such that $b^X=B$. By the inductive assumption this implies the Alperin-McKay Conjecture for $B$. 
\end{proof}
\renewcommand{\proofname}{Proof}

\section{Clifford theory in covering blocks}\label{sec2+}

For the proof of Theorem \ref{thm2_7} below we use results from \cite{KoshitaniSpaeth}. First we recall the definition of Dade's {\bf ramification group} from \cite{Dade} in the reformulation given by \cite{Murai_Dade_group}.  

\begin{defi}\label{H[b]}  Whenever $N\lhd A$ are finite groups and $e$ is an $\ell$-block of $N$, one denotes by $A[e]$ the group with $N\leq A[e]\lhd A_e\leq A$ defined by 
\[A[e]=\{ a\in A_e\mid \la_{e^{(a)}}(\Cl_{\spann<N,a>}(y)^+)\not= 0 \text{ for some }y\in aN\},\]
 where in the above $e^{(a)}$ denotes an arbitrary block of $\spann<N,a>\leq A_e$ covering $e$.
(It is known that the above definition does not depend on the choice of the block $e^{(a)}$, see \cite[3.3]{Murai_Dade_group}.)
\end{defi}

The following technical lemma can be proved now and is used later. 
\begin{lem} \label{red}
Let $N\lhd A$ and $N\leq J\lhd A$ such that $A/J$ and $J/N$ are abelian. 
Let $b\in\Bl(N)$ and $\chi, \phi \in \Irr(b)$. Let  $\w\chi\in \Irr(A)$ and $\w\phi\in\Irr(A[b])$ be  extensions of $\phi$ and $\chi$, respectively with $\bl\bigl(\Res^A_{J_1}(\w\chi)\bigr) =\bl\bigl(\Res^{A[b]}_{J_1}(\w\phi)\bigr)$ for every $J_1$ with $N\leq J_1 \leq J[b]$. 
Then there exists an extension $\w\chi_1$ of $\Res^A_{J}(\w\chi)$ to $A$ with 
$\bl\bigl(\Res^A_{J_2}(\w\chi_1)\bigr)=\bl\bigl(\Res^{A[b]}_{J_2}(\w\phi)\bigr)$ for every $J_2$ with $N\leq J_2\leq A[b]$.
\end{lem}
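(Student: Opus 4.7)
The plan is to realize $\w\chi_1$ as a twist $\w\chi \otimes \Infl^A_{A/J}(\mu)$ of $\w\chi$ by a well-chosen linear character $\mu \in \Irr(A/J)$. Since $A/J$ is abelian and $\Res^A_J(\w\chi)$ is irreducible and $A$-invariant, Gallagher's theorem ensures that every extension of $\Res^A_J(\w\chi)$ to $A$ has this form for a unique $\mu$, so the task reduces to producing the right $\mu$.

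A straightforward consequence of Definition~\ref{H[b]} is that $A[b]\cap J=J[b]$, so $A[b]/J[b]$ embeds as a subgroup of the abelian quotient $A/J$. For any $J_2$ with $N\leq J_2\leq A[b]$, set $J_2^{(0)}:=J_2\cap J$; then $N\leq J_2^{(0)}\leq J[b]$ and $J_2/J_2^{(0)}\hookrightarrow A[b]/J[b]\hookrightarrow A/J$. The hypothesis applied to $J_1=J_2^{(0)}$ already gives $\bl\bigl(\Res^A_{J_2^{(0)}}(\w\chi)\bigr)=\bl\bigl(\Res^{A[b]}_{J_2^{(0)}}(\w\phi)\bigr)$, and since any twist by $\Infl(\mu)$ is trivial on $J$, this equality is preserved automatically when we pass from $\w\chi$ to $\w\chi_1$. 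The content of the lemma is therefore the passage from $J_2^{(0)}$ up to $J_2$.

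To control that passage, I would invoke the classical description, used in \cite{KoshitaniSpaeth}, that the $\ell$-blocks of $J_2$ covering a fixed $J_2$-invariant block of its normal subgroup $J_2^{(0)}$ (with abelian quotient) form a torsor under $\Irr(J_2/J_2^{(0)})$, the action being given by the tensor product. This provides, for each $J_2$ as above, a unique $\mu_{J_2}\in\Irr(J_2/J_2^{(0)})$ satisfying $\bl\bigl(\Res^A_{J_2}(\w\chi)\otimes\Infl(\mu_{J_2})\bigr)=\bl\bigl(\Res^{A[b]}_{J_2}(\w\phi)\bigr)$. The remaining step is to show that the local characters $\mu_{J_2}$ are compatible with restriction along towers $J_2\leq J_2'\leq A[b]$: this reduces, via evaluation of the central characters $\la_{\bl(\cdot)}$ on $\ell$-regular class sums, to the analogous compatibility carried by $\w\chi$ on $J[b]$ and by $\w\phi$ on $A[b]$, hence to the hypothesis. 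The resulting coherent system assembles into a single $\mu_\star\in\Irr(A[b]/J[b])$; since $A[b]/J[b]\leq A/J$ with $A/J$ abelian, $\mu_\star$ extends to some $\mu\in\Irr(A/J)$, and $\w\chi_1:=\w\chi\otimes\Infl^A_{A/J}(\mu)$ then satisfies the required block identity on every $J_2$ with $N\leq J_2\leq A[b]$.

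The hard part will be the compatibility of the $\mu_{J_2}$ as $J_2$ varies, i.e.\ proving that the required block twist depends only on the image of $J_2$ in $A[b]/J[b]$ and not on $J_2$ itself. Morally this is a cocycle statement for $A[b]/J[b]$ with values in a module of linear characters, and is precisely the kind of input that the results of \cite{KoshitaniSpaeth} on covering blocks with abelian factor group are designed to supply; translating their statements into the present setting and checking that the hypothesis of Lemma~\ref{red} feeds into them cleanly is where the main technical effort will lie.
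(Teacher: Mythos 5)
Your opening observation is correct and matches the underlying mechanism of the paper's proof: since the desired $\w\chi_1$ is required to restrict to $\Res^A_J(\w\chi)$ and $A/J$ is abelian, Gallagher forces $\w\chi_1=\w\chi\otimes\Infl^A_{A/J}(\mu)$ for some $\mu\in\Irr(A/J)$, and the problem is to produce the right $\mu$. From there, however, the proposal diverges from the paper and contains a genuine error.

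The crucial flaw is the claimed \emph{torsor} structure. For $J_2^{(0)}\lhd J_2$ with $J_2/J_2^{(0)}$ abelian and a $J_2$-invariant block $c\in\Bl(J_2^{(0)})$, the tensoring action of $\Irr(J_2/J_2^{(0)})$ on the set $\Bl(J_2\mid c)$ of blocks covering $c$ is transitive but not free in general: the stabilizer of a covering block is nontrivial (it is controlled precisely by the ramification group $J_2[c]$). Consequently, the linear character $\mu_{J_2}$ you want to single out is not unique, and the subsequent ``coherent system $\mu_\star$'' is not well-defined. This is not a repairable technicality within the proposed scheme: the ambiguity in $\mu_{J_2}$ lives exactly in $\Irr(J_2/J_2[c])$, and removing it is equivalent to the content of the lemma, so the plan is circular at its core. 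Moreover, the reduction of the coherence statement ``via evaluation of central characters on $\ell$-regular class sums... to the hypothesis'' is asserted but not performed, and the hypothesis only gives information on subgroups $J_1\leq J[b]$, whereas the coherence must be propagated up through $A[b]/J[b]$; no mechanism for that ascent is given.

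The paper's proof handles both difficulties by switching to a concrete group-theoretic scaffolding rather than a gluing argument: it picks a group $I$ with $N\leq I\leq A[b]$ such that $I/N$ is a Hall $\ell'$-subgroup of $A[b]/N$ and $(I\cap J)/N$ a Hall $\ell'$-subgroup of $J[b]/N$. Theorem C(b)(1) of \cite{KoshitaniSpaeth} supplies an extension $\w\chi_2$ of $\chi$ to $I$ in the block $\bl(\Res^{A[b]}_I(\w\phi))$; Lemmas 2.4, 2.5 and 3.7 of \cite{KoshitaniSpaeth} then force $\Res^A_{I\cap J}(\w\chi)=\Res_{I\cap J}(\w\chi_2)$, and Lemma 5.8(a) of \cite{CabSpaeth2} glues $\w\chi_2$ with $\Res^A_J(\w\chi)$ to a character of $IJ$ which extends to $A$ because $A/J$ is abelian. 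The block identity on all $J_2$ with $N\leq J_2\leq A[b]$ is obtained by checking it only on $\ell$-regular elements (which all lie in a conjugate of $I$), via \cite[2.4, 2.5]{KoshitaniSpaeth}. Your Gallagher framing is correct in spirit — the extension produced by the paper is indeed a twist of $\w\chi$ by a linear character of $A/J$ — but to make this constructive you need the Hall-subgroup step and the Koshitani--Sp\"ath extension theorem, not a block torsor.
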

\begin{proof} 
Since $A/N$ is solvable, there exists some group $I$ with $N\leq I\leq A[b]$ such that $I/N$ is a Hall $p'$-subgroup of $A[b]/N$ and $(I\cap J )/N$ is a Hall $p'$-subgroup of $J[b]/N$, see \cite[18.5]{A}. 
According to Theorem C(b)(1) of \cite{KoshitaniSpaeth} there exists an extension $\w\chi_2\in\Irr(I)$ of $\chi$ to $I$ with $\bl(\w\chi_2)= \bl(\Res^{A[b]}_{I}(\w\phi))$.
This extension satisfies also $\bl(\Res^{I}_{I\cap J}(\w\chi_2))=\bl(\Res^{A[b]}_{I\cap J}(\w\phi))$ according to Lemmas 2.4 and 2.5 of \cite{KoshitaniSpaeth}.
By Lemma 3.7 of \cite{KoshitaniSpaeth} there is a unique character in $\Irr(I\cap J\mid \chi)$ with this property, hence  $\Res^{A}_{I\cap J}(\w\chi)=\Res^{A[b]}_{I\cap J}(\w\chi_2)$ by the assumptions on $\w\chi$.

By Lemma 5.8(a) of \cite{CabSpaeth2} there exists an extension $\eta\in\Irr(IJ)$ of $\Res^{A}_{J}(\w\chi)$ such that $\Res^{IJ}_I (\eta )=\w\chi_2$.
Since $\Res^{A}_{J}(\w\chi)$ extends to $A$ and $A/J$ is abelian, every character of $\Irr(A\mid\Res^{A}_{J}(\w\chi))$ is an extension of $\Res^{A}_{J}(\w\chi)$.
Since $\Irr(A\mid \eta)\subseteq\Irr(A\mid \Res^{A}_{J}(\w\chi))$ there exists an extension $\w\eta$ of $\eta$ to $A$. 
According to \cite[2.4]{KoshitaniSpaeth} (for ordinary characters instead of Brauer characters) the character $\w \eta$ satisfies then
\[ \bl\bigl(\Res_{\spann<N,x>}^A( \w\eta)\bigr)=\bl\bigl(\Res^{A[b]}_{\spann<N,x>}( \w \phi)\bigr) \text{ for every $\ell$-regular }x \in I.\]
Note that every $\ell$-regular $x\in A[b]$ is conjugate to some element in $I$. Accordingly the above equality holds for every $\ell$-regular $x\in A[b]$. According to \cite[2.5]{KoshitaniSpaeth} this implies
\begin{align*}
 \bl\bigl(\Res^A_{J_2} (\w \eta)\bigr)&=\bl\bigl(\Res^{A[b]}_{J_2}( \w \phi)\bigr) \text{ for every group $J_2$ with }N\leq J_2\leq A[b].
 \qedhere
 \end{align*}
\end{proof}

\section{A criterion with stability conditions}\label{sec3}
Here we prove a criterion for inductive AM-condition adapted to the context of simple groups of Lie type. It is related to the one introduced in \cite[2.10]{Spaeth5} and was also used in \cite{CabSpaeth2}. In order to deal with block theoretic properties we have to make additional assumptions that limit applications to a certain class of blocks. 


\begin{theorem}\label{thm2_7}
Assume the situation of \cite[2.10]{Spaeth5}: 
Let $S$ be a finite non-abelian simple group and $\ell$ a prime with $\ell\mid |S|$. Let $G$ be the universal covering group of $S$ and $D$ a radical $\ell$-subgroup of $G$. Assume we have a semi-direct product $\w G\rtimes E$ and a subgroup $M<G$ such that the following statements hold: 
\enumroman
\begin{enumerate}
\item \label{thm2_7i}
\begin{itemize}
\item  $G=[\w G,\w G]$ and
  $E$ is abelian,
\item $\Cent_{\wG\rtimes E}(G)= \Z(\wG)$ and $\w GE/\Z(\wG)\cong\Inn(G)\Aut(G)_D$ by the natural map,
\item $\NNN_G(D)\leq M\neq G$ and $M$ is $\Aut(G)_D$-stable,
 	\item any element of $\Irr_0(G\mid D)$ extends to its stabilizer in $ \w G$,
\item any element of $\Irr_0(M\mid D)$ extends to its stabilizer in  $ \w M :=M\NNN_{\wG}(D)$.
\end{itemize} 
\item \label{thm2_7ii}
 For $ \calG:=\Irr\left (\Gu\mid \Irr_0(G\mid D)\right )$ and $\calM:=\Irr\left (\Mu\mid \Irr_0(M\mid D)\right )$ there exists an $\NNN_{\wG E}(D)$--equivariant bijection \[\w \Omega_D: \calG \longrightarrow \calM\] with 
 \begin{itemize}
	\item $ \Omegau_D\bigl(\calG\cap\Irr(\Gu\mid \nu)\bigr)= \calM\cap\Irr(\Mu\mid \nu)$ for every $\nu \in \Irr(\Z(\Gu))$,
	\item \label{Omega_u_epsilon_equiv2_block} $\bl\bigl(\Omegau_D(\chi)\bigr)^{\wG}=\bl(\chi)$ for every $\chi\in\calG$, and
	\item \label{Omega_u_epsilon_equiv2_8} $\Omegau_D(\chi\mu)= \Omegau_D(\chi)\ \Res^{\wG}_{\Mu}( \mu)$ for every $\mu \in \Irr(\Gu/G)$ and  $\chi\in\calG$.
\end{itemize}
\item \label{thm2_7iii}
For every $\chi\in \calG$ there exists some $\chi_0\in \Irr(G\mid \chi)$ such that 
\begin{itemize}
\item $(\Gu  E)_{\chi_0}= \Guchinull  E_{\chi_0}$, and
\item $\chi_0$ extends to $G   E_{\chi_0}$.
\end{itemize} 
\item \label{thm2_7iv}
For every $\psi\in \calM$ there exists some $\psi_0\in \Irr(M\mid \psi)$ such that 
	\begin{itemize}
		\item $O= (\Gu\cap O) \rtimes (E\cap O)$ for $O:=G(\Gu\rtimes E)_{D,\psi_0}$, and
		\item $\psi_0$ extends to $M(G\rtimes E)_{D,\psi_0}$.
	\end{itemize}
	\item \label{thm2_7_block} For every group $J$ with $G\leq J\leq \wG$ and any $p$-subgroup $\w D\leq J$ with $\w D\cap G=D$, every $b\in\Bl(J\mid \w D)$ satisfies $\wG_b=\wG$.
\end{enumerate}
\enumalph
Then the inductive AM-condition holds for $S$ with respect to $D\Z(G)/\Z(G)$ in the sense of Definition~\ref{def_rel_AM}.
\end{theorem}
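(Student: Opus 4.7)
The plan is to derive each condition of Definition~\ref{def_rel_AM} from the corresponding hypothesis in Theorem~\ref{thm2_7}. First I would construct the bijection $\Omega_D$ by descending $\w\Omega_D$ through Clifford theory. A Frattini-style argument using \ref{thm2_7i} yields $\wG=G\NNN_{\wG}(D)$ and $\wM=M\NNN_{\wG}(D)$, so that restriction identifies $\Irr(\wG/G)$ with $\Irr(\wM/M)$. The third bullet of \ref{thm2_7ii} expresses compatibility of $\w\Omega_D$ with tensoring by these linear characters, so $\w\Omega_D$ descends modulo this tensoring action to the desired $\Omega_D\colon\Irr_0(G\mid D)\to\Irr_0(M\mid D)$. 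The $\Aut(G)_D$-equivariance of $\Omega_D$ then follows from the $\NNN_{\wG E}(D)$-equivariance of $\w\Omega_D$ together with the identification $\wG E/\Z(\wG)\cong\Inn(G)\Aut(G)_D$ from \ref{thm2_7i}; the condition on $\Z(G)$-central characters is inherited from its $\w\Omega_D$-counterpart over $\Z(\wG)\supseteq\Z(G)$. The block identity $\bl(\chi)=\bl(\Omega_D(\chi))^G$ is extracted from $\bl(\w\chi)=\bl(\w\Omega_D(\w\chi))^{\wG}$ by transitivity of Brauer induction, with condition \ref{thm2_7_block} ensuring that all blocks in play are $\wG$-stable so that covering is tight.

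Next I would address part~\ref{def_rel_AM_cohom} of Definition~\ref{def_rel_AM}. Given $\chi\in\Irr_0(G\mid D)$, which plays the role of $\chi_0$ in \ref{thm2_7iii}, set $Z:=\ker(\chi)\cap\Z(G)$ and define $A$ as the image of $GE_\chi$ in $(\wG\rtimes E)/Z$. The isomorphism $\wG E/\Z(\wG)\cong\Inn(G)\Aut(G)_D$ from \ref{thm2_7i}, together with the fact that $\Aut(G)_\chi$ stabilises the $G$-class of $D$ for $\chi\in\Irr_0(G\mid D)$, supplies the property $A/\Z(A)\cong\Aut(G)_\chi$ as required. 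The extension of $\chi$ to $GE_\chi$ provided by \ref{thm2_7iii} descends to a character $\w\chi\in\Irr(A)$ extending $\o\chi$. Setting $\chi':=\Omega_D(\chi)$ and applying \ref{thm2_7iv} produces an extension of $\chi'$ to $M(G\rtimes E)_{D,\chi'}$, which descends to a character $\w\chi'\in\Irr(\o M\NNN_A(\o D))$ extending $\o\chi'$. The identity $\Irr(\Z(A)\mid\w\chi)=\Irr(\Z(A)\mid\w\chi')$ then follows from the central-character compatibility of $\w\Omega_D$ established above.

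The main obstacle will be part~\ref{def_rel_AM_cohom_iv}, namely the block equality $\bl\bigl(\Res^A_J(\w\chi)\bigr)=\bl\bigl(\Res^{\o M\NNN_A(\o D)}_{\o M\NNN_J(\o D)}(\w\chi')\bigr)^J$ required for \emph{every} intermediate group $J$ with $\o G\leq J\leq A$. The strategy is to first establish the equality in the base case $J=\o G$, where it reduces to the block-correspondence bullet of \ref{thm2_7ii} after transport through the $Z$-quotient, and then to propagate it upward by iterated application of Lemma~\ref{red} along the filtration $\o G\leq\o G\NNN_A(\o D)\leq A$. The abelian-quotient hypothesis of Lemma~\ref{red} is satisfied since $E$ is abelian by \ref{thm2_7i}, and the crucial input about the Dade ramification group comes from \ref{thm2_7_block}: the $\wG$-stability of every block of $G$ with defect contained in $D$ forces $A[b]$ to coincide with the full stabiliser $A_b$ throughout, which is exactly the hypothesis required to invoke Lemma~\ref{red}. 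Combining the base case with successive applications of Lemma~\ref{red} then yields the desired block equality at every $J$ and completes the verification of Definition~\ref{def_rel_AM}.
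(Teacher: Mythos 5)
Your first paragraph (construction of $\Omega_D$, block compatibility on the level of $G$ and $M$) is essentially the argument of the paper's Section~\ref{sec3_1}, modulo being stated rather loosely; the transversal bookkeeping and the Harris--Kn\"orr argument using \ref{thm2_7_block} to resolve the $\wG$-conjugacy ambiguity are there in spirit. The real trouble is in the second half.

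First, your construction of $A$ as ``the image of $GE_\chi$ in $(\wG\rtimes E)/Z$'' cannot work. The required isomorphism $A/\Z(A)\cong\Aut(G)_\chi$ fails whenever $\wG_\chi$ strictly contains $G\Z(\wG)$, because $GE_\chi$ misses exactly the diagonal automorphisms of $G$ induced by $\wG_\chi$; and even in the degenerate case the property $\Cent_A(\o G)=\Z(A)$ is not automatic, since $E_\chi$ need not centralize $\Z(G)$. The paper instead constructs $A$ as the finite central extension of $\Aut(G)_\chi$ associated to the (finite-order) factor set $\beta$ of a projective representation lifting $\chi$, following \cite[8.28]{Navarro} and \cite[2.7--2.8]{Spaeth5}; this is precisely what forces the two properties to hold simultaneously, and it is also what makes $\w\chi$ a genuine linear character of $A$. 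You need this construction, not an ad hoc quotient of $GE_\chi$.

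Second, your reading of \ref{thm2_7_block} as implying that the Dade ramification group $A[b]$ equals the full stabiliser $A_b$ is incorrect, and it is not the hypothesis under which Lemma~\ref{red} operates anyway. Condition~\ref{thm2_7_block} concerns $\wG$-invariance of blocks of groups sandwiched between $G$ and $\wG$; the paper uses it (via Harris--Kn\"orr and \cite[9.3]{Navarro}) only to pin down the covering block on the $\wG$-side, yielding in Proposition~\ref{H[b]2H} the block identity for \emph{all} $J_2$ between $\o G$ and $J:=\epsilon^{-1}(\Aut_{\wG_\chi}(G))$, not merely for $J_2=\o G$. Your ``base case $J=\o G$'' is therefore too weak to feed into Lemma~\ref{red}. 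The actual passage to $A[b]$ is via Theorem~C of \cite{KoshitaniSpaeth}, which supplies a character $\w\phi\in\Irr(A[b])$ Brauer-compatible with $\w\psi$; Lemma~\ref{red} then produces a modified extension $\w\chi_3$ of $\o\chi$ to $A$ matching $\w\phi$ on all $J_2\leq A[b]$, and the final step to arbitrary $J_3\leq A$ is by transitivity of block induction using \cite{Murai_Dade_group}. Your plan of ``iterated application of Lemma~\ref{red} along $\o G\leq\o G\NNN_A(\o D)\leq A$'' does not follow the hypothesis pattern of that lemma, in which the intermediate subgroup $J$ must be normal in $A$ with $A/J$ and $J/\o G$ abelian, and the key input is the extension $\w\phi$ over $A[b]$ rather than any claimed equality $A[b]=A_b$.
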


Apart from the block statements of \ref{Omega_u_epsilon_equiv2_block} and  \ref{thm2_7_block}, note that in the case of $D\in\Syl_{\ell}(G)$ the above assumptions coincide with the ones of \cite[2.10]{Spaeth5}. Accordingly many ideas of its proof can be transferred, especially the bijection $\Omega_D$ is constructed in the same fashion. The main difficulty consists in proving the additional condition on the blocks from Definition \ref{def_rel_AM_cohom_iv}.

\subsection{The bijection $\Omega_D$}\label{sec3_1}
Let $\calG_0$ be a complete representative system in $\calG$ under the action of $\Irr(\wG/G)\rtimes E$, where $\Irr(\wG/G)$ acts on $\calG$ by multiplication. Let also $\calM_0:=\Omegau_D(\calG_0)$, and $\calG'_0$ a subset of $\Irr_0(G\mid D)$ such that for every $\chi\in \calG$ the set $\calG'_0\cap \Irr_0(G\mid \chi)$ contains exactly one character $\chi_0$ and this character has the properties from \ref{thm2_7iii}. Analogously one can choose a set $\calM'_0\subseteq\Irr_0(M\mid D)$ such that for every $\psi\in \calM$ the set $\calM'_0\cap \Irr_0(M\mid \psi)$ contains exactly one character $\psi_0$ and this character has the properties from \ref{thm2_7iv}. The proof of Theorem 2.10 of \cite{Spaeth5} shows that we obtain an $\NNN_{\wG E}(D)$-equivariant bijection 
$\Omega_D: \Irr_0(G\mid D)\longrightarrow \Irr_0(M\mid D)$ 
by setting first
\[ \Omega_D(\chi_0)=\psi_0,\]
whenever $\chi_0\in\calG'_0$, and $\psi_0\in\calM'_0$, with $\Omegau_D(\Irr(\w G\mid\chi_0)\cap \calG_0 )= \Irr(\w M\mid\psi_0)\cap \calM_0$ and setting additionally 
\[ \Omega_D(\chi_0^a):=\Omega_D(\chi_0)^a \text{ for every }\chi_0\in\calG'_0 \text{ and } a\in\NNN_{\wG E}(D).\]

Because of the property of $\Omegau_D$ stated in \ref{thm2_7ii} our bijection $\Omega_D$ satisfies \[\Omega_D\bigl(\Irr_0(G\mid D)\cap \Irr(G\mid \nu)\bigr)\subseteq \Irr(M\mid \nu) \text{ for every }\nu\in\Irr(\Z(G)).\]
Let $\chi_0\in\calG'_0$. Then there exist a unique character $\chi\in \Irr(\w G\mid\chi_0)\cap \calG_0$ and a unique character $\psi\in \Irr(\w M\mid\psi_0)\cap \calM_0$. By construction they satisfy $\psi :=\w\Omega_D(\chi )$.   Assumption \ref{Omega_u_epsilon_equiv2_8} gives $\bigl(\bl (\w\Omega_D (\chi ))\bigr)^\wG =\bl (\chi )$. Clearly $\bl (\chi )$ covers $\bl (\chi_0)$ and $\bl \bigl(\w\Omega_D (\chi )\bigr)$ covers $\bl \bigl(\Omega_D (\chi_0)\bigr)$ by \cite[9.2]{Navarro}. Moreover $\bl \bigl(\Omega_D (\chi_0 )\bigr)^G$ is covered by $\bl (\chi )=\bigl(\bl (\w\Omega_D (\chi ))\bigr)^\wG$ thanks to the Harris-Kn\"orr theorem \cite[9.28]{Navarro} applied to $G\lhd \w G$.

So we get that both $\bl \bigl(\Omega_D (\chi_0 )\bigr)^G$ and $\bl (\chi_0 )$ are covered by $\bl (\chi )$, therefore $\bl \bigl(\Omega_D (\chi_0 )\bigr)^G$ and $\bl (\chi_0 )$ are $\wG$-conjugate by \cite[9.3]{Navarro}). But now assumption \ref{thm2_7_block} implies that $$\bl \bigl(\Omega_D (\chi_0 )\bigr)^G$$ and $\bl (\chi_0 )$ are $\w G$-invariant and hence they coincide.

Since $\calG'_0$ is an $\NNN_{\wG E}(D)$-transversal in $\Irr_0(G\mid D)$ this also implies that 
\begin{align}\label{star} \bl\bigl(\Omega_D(\chi)\bigr)^G&=\bl(\chi) \text{ for every }\chi\in\Irr_0(G\mid D).\end{align}

In all we have constructed an $\Aut(G)_D$-equivariant bijection $\Omega_D: \Irr_0(G\mid D) \rightarrow \Irr_0(M_D\mid D)$ such that \[ \bl\bigl(\Omega(\chi)\bigr)^G=\bl(\chi) \text{ for every } \chi\in\Irr_0(G\mid D).\]


\subsection{ }\label{sec3_2}
It remains to ensure that any $\chi\in\Irr_0(G\mid D)$ satisfies the condition stated in Definition \ref{def_rel_AM_cohom}. According to Lemma \ref{lem2_4} and Remark \ref{rem2_5} this is equivalent to the fact that $\chi$ satisfies 7.2(c) of \cite{Spaeth_AM_red}. But it is clear that if this condition holds for a character $\Irr_0(G\mid D)$ then it also holds for any $\Aut(G)_D$-conjugate of this character. 
Hence we may concentrate on characters $\chi\in\calG_0'\subseteq\Irr_0 (G\mid D)$. 

For any character $\chi\in\calG_0'$ we prove in a first step the following statement that already provides a group $A$ and characters with some of the required properties. We continue using the notation from Sect. \ref{sec3_1}. 

\begin{prop}\label{H[b]2H}
Let $\chi\in\calG_0$ and $\Omega_D$ be the bijection from Sect. \ref{sec3_1}. Then there exists a group $A$, characters $\w \chi$ and $\w\psi$ such that 
\enumroman
\begin{enumerate}
\item for $Z:=\ker(\chi)\cap \Z(G)$ and $\o G:=G/Z$, $A$ satisfies $\o G\lhd A$, $\Cent_A(\o G)=\Z(A)$ and $A/\Z(A)\cong \Aut(G)_\chi$,
\item $\w\chi\in\Irr(A)$ is an extension of the character $\o\chi\in\Irr(\o G)$ that lifts to $\chi$
\item $\w\psi\in\Irr\bigl(\o M \NNN_A(\o D)\bigr)$ is an extension of the character $\o\psi \in \Irr(\o M)$ that lifts to $\psi:=\Omega_D(\chi)$ where $\o M:=M/Z$ and $\o D:=DZ/Z$,
\item $\Irr ({\Z(A)}\mid \w\chi)= \Irr ({\Z(A)}\mid \w\psi)$.

\item there exists a group $J$ with $\o G \Z (A)\leq J \lhd A$ with abelian factor groups $A/ J$ and $J/\o G$, such that \[\bl\bigl(\Res^{A}_{J_2}(\w\chi)\bigr)=\bl\bigl(\Res^{\o M \NNN_A(\o D)}_{\o M \NNN_{J_2}(\o D)}(\w\psi)\bigr)^{J_2}\text{ for every $J_2$ with }\o G\leq J_2 \leq J.\]
\end{enumerate}
\enumalph
\end{prop}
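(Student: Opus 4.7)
The plan is to build the group $A$ and the extensions $\w\chi$, $\w\psi$ from the data of \ref{thm2_7i}, \ref{thm2_7iii}, \ref{thm2_7iv}, then use the Brauer-correspondence statements in \ref{thm2_7ii} and \ref{thm2_7_block}, combined with Lemma~\ref{red}, to secure the block equality (v).

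First I would pick the $G$-character $\chi_0\in\calG_0'$ underlying $\chi$ and set $\psi_0:=\Omega_D(\chi_0)\in\calM_0'$. Hypotheses \ref{thm2_7iii} and \ref{thm2_7iv} provide extensions $\hat\chi_0\in\Irr(GE_{\chi_0})$ and $\hat\psi_0\in\Irr(M(G\rtimes E)_{D,\psi_0})$, while the last two bullets of \ref{thm2_7i} give extensions of $\chi_0$, $\psi_0$ to their stabilizers in $\wG$ and $\w M$. Because the two stabilizers split as asserted in \ref{thm2_7iii}--\ref{thm2_7iv}, multiplying these extensions produces common extensions $\hat\chi\in\Irr\bigl((\wG E)_{\chi_0}\bigr)$ of $\chi_0$ and $\hat\psi\in\Irr\bigl((\w M(G\rtimes E))_{D,\psi_0}\bigr)$ of $\psi_0$.

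Next I would define $A:=(\wG E)_{\chi_0}/Z_0$, where $Z_0\leq \Z(\wG)_{\chi_0}$ is chosen so that $Z_0\cap G=Z:=\ker\chi_0\cap\Z(G)$ and $\hat\chi$ is trivial on $Z_0$. The identifications $\Cent_{\wG\rtimes E}(G)=\Z(\wG)$ and $\wG E/\Z(\wG)\cong\Inn(G)\Aut(G)_D$ in \ref{thm2_7i} then give $\Cent_A(\o G)=\Z(A)$ and $A/\Z(A)\cong \Aut(G)_{\chi_0}$, using that $\Aut(G)_{\chi_0}=\Inn(G)_{\chi_0}\Aut(G)_{D,\chi_0}$ as defect groups of $\bl(\chi_0)$ are $G$-conjugate. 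Deflating $\hat\chi$ and $\hat\psi$ through $Z_0$ yields $\w\chi\in\Irr(A)$ and $\w\psi\in\Irr\bigl(\o M\NNN_A(\o D)\bigr)$ extending $\o\chi$ and $\o\psi$. Property (iv), the equality $\Irr(\Z(A)\mid \w\chi)=\Irr(\Z(A)\mid \w\psi)$, follows from the first bullet of \ref{thm2_7ii}, since both extensions agree by construction with the original $\wG$- and $\w M$-characters on $\Z(\wG)/Z_0$.

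For (v) I would take $J$ to be the image in $A$ of $\wG_{\chi_0}$; then $J/\o G$ embeds into $\wG/G\Z(\wG)$ (abelian by \ref{thm2_7i}) and $A/J$ sits inside $E_{\chi_0}$, also abelian by \ref{thm2_7i}. The block identity $\bl\bigl(\w\Omega_D(\chi)\bigr)^{\wG}=\bl(\chi)$ in \ref{thm2_7ii}, combined with \ref{thm2_7_block} (forcing all covering blocks to be $\wG$-stable, so the covering relation is unambiguous) and the Harris--Kn\"orr theorem applied to $G\lhd \wG$, yields the desired block equality at the top level $J_2=J$. Lemma~\ref{red}, applied with $N=\o G$, the chosen $J$, and our extensions, then propagates this identity to every intermediate $J_2$ with $\o G\leq J_2\leq A$, possibly after replacing $\hat\chi$ by a twist by a linear character of $A/J$ trivial on $\Z(A)$, so that (iv) remains valid. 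The main obstacle is precisely this compatibility at intermediate $J_2$: conditions \ref{thm2_7iii}--\ref{thm2_7iv} deliver extensions, but not a priori ones matching Brauer correspondence at each step; Lemma~\ref{red} is tailored to overcome this via the abelian factor structure together with the block uniqueness from \cite[3.7]{KoshitaniSpaeth}, reducing the verification to a single check above $J[b]$.
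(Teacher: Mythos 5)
Your construction of $A$ is a genuinely different one from the paper's, and it has a gap. You propose $A:=(\wG E)_{\chi_0}/Z_0$, a quotient of the stabilizer subgroup. But then $\Cent_A(\o G)$ is (roughly) the image of $\Z(\wG)\cap(\wG E)_{\chi_0}$ in $A$, and for such an image to lie in $\Z(A)$ you need $E_{\chi_0}$ to act trivially on $\Z(\wG)/Z_0$. There is no reason for this to hold: $E$ contains field (and graph) automorphisms, which act nontrivially on $\Z(\wG)^F$, and this action does not disappear when you restrict to $E_{\chi_0}$. So your $A$ may have $\Z(A)\subsetneq\Cent_A(\o G)$, violating conclusion (i). The identifications $\Cent_{\wG\rtimes E}(G)=\Z(\wG)$ and $\wG E/\Z(\wG)\cong\Inn(G)\Aut(G)_D$ from \ref{thm2_7i} do not rescue this, because they concern the full group $\wG E$, not the quotient you are forming.

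The paper avoids this by not taking a quotient of $\wG E$ at all. Instead it builds projective representations $\calP$ of $\Aut(G)_\chi$ and $\calP'$ of $(M/\Z(G))\Aut(G)_{D,\chi}$ with matching finite-order factor sets, and then takes $A$ to be the "Darstellungsgruppe" on pairs $(a,\zeta)$ as in \cite[8.28]{Navarro}, with multiplication twisted by the common factor set $\beta$. This is a central extension of $\Aut(G)_\chi$ by a cyclic group $C$ of roots of unity, and by construction $\Z(A)=C=\Cent_A(\o G)$; the extensions $\w\chi$, $\w\psi$ come for free by lifting $\calP$, $\calP'$ to linear representations. Your use of the factorizations from \ref{thm2_7iii}--\ref{thm2_7iv}, the Harris--Kn\"orr theorem and \ref{thm2_7_block} to get the top-level block identity is sound and matches the paper's spirit; the problem is solely the shape of $A$.

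A secondary structural point: Lemma~\ref{red} is not used inside the proof of this proposition. The proposition only asserts the block equality for $J_2$ with $\o G\leq J_2\leq J$ (with $J$ the explicit subgroup corresponding to $\Aut_{\wG_\chi}(G)$); the extension of that equality from $J$ up to $A$ is handled afterwards, in the proof of Theorem~\ref{thm2_7}, via Theorem~C of \cite{KoshitaniSpaeth} and Lemma~\ref{red}. You are therefore trying to prove more than what (v) asks for at this stage.
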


\begin{proof}
By definition the elements $\chi\in\calG_0'$ satisfy 
\[( {\wG E})_\chi=\wG_\chi\rtimes E_\chi.\]
Recall $\w M:=M\NNN_{\w G}(D)$. The characters $\chi$ and $\psi$ have unique extensions $\w\chi_1\in\Irr(\wG_\chi)$ and $\w\psi_1\in\Irr(\w M_\psi)$ with $\Ind^\wG_{\wG_\chi}(\w\chi_1)\in \calG_0$ and $\Ind^{\wM}_{\wM_\psi} (\w\psi_1)\in\calM_0$. Using the considerations that lead to \ref{star} above, we get 
\[ \bl\bigl(\Ind^\wG_{\wG_\chi}(\w\chi_1)\bigr)=\bl\bigl(\Ind^{\wM}_{\wM_\psi} (\w\psi_1 )\bigr)^\wG.\]
The assumption \ref{thm2_7_block} and the Harris-Kn\"orr Theorem from \cite[9.28]{Navarro} with \cite[9.3]{Navarro} imply that  
\begin{align}\label{blockswchi1}
\bl\bigl(\Res^{\wG_\chi}_{J_2}(\w\chi_1)\bigr)&=\bl\bigl(\Res^{\w M_\psi}_{J_2\cap \wM_\psi}(\w\psi_1)\bigr)^{J_2}\forevery J_2 \with G\leq J_2\leq \w G_\chi.
\end{align}

Let $\rep: \Aut(G)_\chi \longrightarrow ({\wG E})_\chi$ be a $\Z(\w G)$-section. Let $\calP_1$ be a projective representation of $\w G_\chi/\Z(\w G)$ obtained from $\chi_1$ using $\rep_{\w G_\chi/\Z(\w G)}$ and $\calP'_1$ be a projective representation of $\w M_\chi/\Z(\w G)$ obtained from $\psi_1$ using $\rep_{\w G_\chi/\Z(\w G)}$. 

Recall that quotients of subgroups of $G$ over $Z_1:=\Z(G)$ are canonically identified with subgroups of $\Aut(G)$, and analogously quotients of subgroup of $\w G$ over $\w Z_1:=\Z(\w G)$.
According to the proof of \cite[2.7]{Spaeth5} there exist projective representations  $\calP$ of $\Aut(G)_\chi$ and $\calP'$ of $ (M/Z_1) \Aut(G)_{D,\chi}$ such that the factor sets $\beta$ and $\beta'$ of $\calP$ and $\calP'$ have finite order and satisfy 
\[ \beta'(a,a')=\beta(a,a') \forevery a,a'\in (M/Z_1) \Aut(G)_{D,\chi}\] 
and 
\[ \calP(g)=\calP_1(g) \und \calP'(m)=\calP'_1(m) \forevery g\in \w G_\chi/\w Z_1 \und m\in \w M_\psi/\w Z_1.\] 

The factor set $\beta$ determines a group $A$ as in the proof of \cite[8.28]{Navarro}: Let $C$ be the cyclic subgroup of $\CC^*$ generated by the values of $\beta$, and let $ A$ be the group whose elements are $(a,\zeta)$ ($a\in\Aut(G)_\chi$ and $\zeta\in C$), and whose multiplication is given by 
\[ (a,\zeta) (a',\zeta')= (aa',\zeta\zeta'\beta(a,a')) \forevery a, a'\in\Aut(G)_\chi \und \zeta ,\zeta '\in C.\]
The epimorphism $\epsilon:A\rightarrow \Aut(G)_\chi$ with  $(a,\zeta)\mapsto a$ is a central extension with kernel $C$ and $\o G\lhd A$ by the proof of \cite[2.8]{Spaeth5}. Let us denote by $\Aut_{\wG_\chi}(G)$ the group of automorphisms of $G$ induced by $\wG_\chi$. Now by the definition of $A$, the projective representation $\calP$ lifts to a linear representation $\calD$ of $A$, that is defined by 
\[ \calD(a,\zeta)=\zeta\calP(a)\forevery a\in\Aut(G)_\chi \und \zeta\in C.\]
Then the character $\w\chi_2$ afforded by $\calD$ satisfies
\[ \w\chi_2(a,\zeta)=\zeta\tr(\calP(a))\text{ for every } a \in \Aut(G)_\chi \und \zeta\in C.\]
For elements $a\in\Aut_{\wG_\chi}(G) $ this implies 
\begin{align}\label{eqwchi0}
 \w\chi_2(a,\zeta)=\zeta\w\chi_1(\rep(a)) \text{ for every } a \in \Aut_{\wG_\chi}(G)  \und \zeta\in C.\end{align}

Let $D_2\in\Syl_\ell(C\spann<(d,1)\mid d\in DZ_1/Z_1>)$ and $M_2:= \spann<(m,1)\mid m\in M/Z_1>$. 
Since $\beta'(a,a')= \beta(a,a')$ for every $a,a'\in (M/Z_1) \Aut(G)_{D,\chi}$, $\calP'$ lifts to the linear representation $\calD'$ of $ M_2\NNN_A(D_2)$ that is defined by 
\[ \calD'(a,\zeta)=\zeta\calP'(a)\forevery a\in (M/Z_1)\Aut(G)_{\chi,D} \und \zeta\in C.\]
Let $ J:=\epsilon^{-1}(\Aut_{\wG_\chi}(G))$ and $H:=\epsilon^{-1}(  (M/Z_1) \Aut(G)_{D,\chi})$.
Like above let $\w\psi_2\in\Irr(M_2 \NNN_A(D_2))$ be the character afforded by $\calD'$. Then $\w\psi_2$ satisfies 
\[ \w\psi_2(a,\zeta)=\zeta\tr(\calP'(a))\text{ for every } a \in \Aut(G)_{D,\chi} \und \zeta\in C, \]
and hence 
\begin{align}\label{eqwpsi0}
 \w\psi_2(a,\zeta)=\zeta\w\psi_1(\rep(a))\text{ for every } a \in (M/Z_1)\bigl(\Aut_{\w G_\chi}(G)\bigr)_{D} \und \zeta\in C. \end{align}

Now we consider restrictions $\wchi_2$ and $\w\psi_2$. By the equation \eqref{eqwchi0} and \eqref{eqwpsi0} the equality \eqref{blockswchi1} leads to 
\begin{align*}
 \la_{\Res^J_{\spann<\o G,x>}(\w\chi_2)}(\Cl_{\spann<\o G,x>}(x)^+)&=\zeta  \la_{\Res^J_{\spann<\o G, \rep(a)>}(\w\chi_1)}\left (\Cl_{\spann<\o G, \rep(a)>}(\rep(a))^+\right )\\
 &=
\zeta \la_{\Res^J_{\spann<\o G, \rep(a)>\cap \w M} (\w\psi_1)}\left((\Cl_{\spann<\o G, \rep(a)>}(\rep(a))  \cap \w M)^+\right )\\
&=\la_{\Res^J_{\spann<\o G,x>\cap H}(\w\psi_2)}\left ((\Cl_{\spann<\o G,x>}(x)\cap H)^+\right )
\end{align*}
for every $x=(a,\zeta)\in J$. This implies 
\[ \bl\bigl(\Res^{A}_{J_2}(\w\chi_2)\bigr)= \bl\bigl(\Res^{H}_{J_2\cap H}(\w\psi_2)\bigr)^{J_2} 
\text{ for every } J_2 \text{ with } \o G\leq J_2\leq J,
\]
according to \cite[2.5(b)]{KoshitaniSpaeth}.
This proves that the proposition, in particular (v), holds with $\w\chi_2$ as $\w\chi$ and $\w\psi_2$ as $\w\psi$. 
 \end{proof}
 
We will now combine Proposition~\ref{H[b]2H} together with Lemma \ref{red} to finish the proof of the theorem of this section.

\renewcommand{\proofname}{Proof of Theorem \ref{thm2_7}}

\begin{proof} 
The bijection from \ref{sec3_1} satisfies the requirements of Definition \ref{def_rel_AM} with respect to $D\Z(G)/\Z(G)$, apart possibly the conditions \ref{def_rel_AM_cohom}. As explained above according to Lemma \ref{lem2_4} and Remark \ref{rem2_5}, it is sufficient to verify that $\chi\in\calG_0$ satisfies \ref{def_rel_AM_cohom}. 

For $\chi\in\calG_0$ Proposition \ref{H[b]2H} applies. We continue using the notation introduced in its proof. Let $H:=M_2\NNN_A(D_2)$ and $\w\psi\in \Irr (H)$ the character from Proposition \ref{H[b]2H}. Then by Theorem~C in \cite{KoshitaniSpaeth}, there exists $\w\phi\in \Irr (A[b])$ such that $ \Res^{A[b]}_{\o G}(\w\phi )$ is irreducible and 
\[ \bl \bigl(\Res^{H}_{H \cap J_2}(\w\psi)\bigr)^{J_2}=\bl \bigl(\Res^{A[b]}_{J_2}(\w\phi)\bigr) \forevery J_2 \with \o G\leq J_2 \leq A[b],\]
where $b=\bl \bigl(\Res^{A[b]}_{\o G}(\w\phi)\bigr)$. 

Together with Proposition \ref{H[b]2H} this proves that the assumptions of Lemma \ref{red} are satisfied for $\w\chi$ and $\w\phi$ (and $N=\o G$). Accordingly there exists an extension $\w\chi_3\in\Irr(A)$ of $\o\chi\in\Irr(\o G)$ with $\Res^A_{J}(\w\chi_3)=\Res^A_{J}(\w\chi)$  and 
\[ \bl\bigl(\Res^{A[b]}_{J_2}(\w\phi)\bigr)=\bl\bigl(\Res^A_{J_2}(\w\chi_3)\bigr) \forevery J_2 \with \o G\leq J_2\leq A[b].\]
By the definition of $\w\phi$ this leads to 
\[ \bl \bigl(\Res^{H}_{J_2 \cap H}(\w\psi)\bigr)^{J_2}= \bl\bigl(\Res^{A[b]}_{J_2}(\w\phi)\bigr)=\bl\bigl(\Res^A_{J_2}(\w\chi_3)\bigr) \forevery J_2 \with \o G\leq J_2\leq A[b].\]

For $J_3$ with  $\o G\leq J_3\leq A$ and $J_2:=J_3\cap A[b]$, 
we know from \cite{Murai_Dade_group} that 
\[ \bl\bigl(\Res^{H}_{J_3\cap H}(\w\psi)\bigr)=
\bl\bigl(\Res^{H}_{J_2\cap H}(\w\psi)\bigr)^{J_3\cap H}\]
and \[ 
\bl\bigl(\Res^{A}_{J_3}(\w\chi_3)\bigr)=
\bl\bigl(\Res^{A}_{J_2}(\w\chi_3)\bigr)^{J_3}.\]
Applying transitivity of block induction now gives us 
\[\bl \bigl(\Res^A_{J_3}(\w \chi_3)\bigr)=  \bl \bigl(\Res^H_{H\cap J_3}(\w \psi)\bigr)^{J_3}.\] 
Accordingly the group $A$ and the characters $\w\chi_3$ and $\w\psi$ satisfy the conditions from \ref{def_rel_AM_cohom}. \end{proof}

\renewcommand{\proofname}{Proof}


\section{Properties of blocks in type $\tA$}\label{sec_blocks}
In this section we show that the assumptions of Theorem~\ref{thm2_7} are satisfied by blocks of maximal defect of $G=\SL_n( q)$ or $\SL_n(-q):=\SU_n(q)$. This is based on some results from \cite{CabSpaeth2}, whose notations are used in the following.

We take $n\geq 2$, $m\geq 1$, $\ep\in\{ 1,-1\}$, $q=p^m$ the power of a prime $p$. We denote $\w\bG =\GL_n (\o\FF_q)\geq \bG =\SL_n (\o\FF_q) $ with Frobenius endomorphism $F = \gamma^{1-\ep\over 2}\circ F_p^m$ where $\gamma$ is the automorphism sending any matrix to its transpose-inverse. We denote the corresponding finite groups as $\wG =\GL_n (\ep q)=\w\bG^F\geq G =\SL_n (\ep q) =\bG^F$.

Let $\ell$ be a prime with $\ell\neq p$. One denotes by $d\geq 1$ the order of $q$ mod $\ell$ when $\ell$ is odd, while $d$ is the order of $q$ mod $4$ when $\ell = 2$.

We choose $\bS$ to be a Sylow $\Phi_d$-torus of $(\bG ,F)$. Moreover let $M=\NNN_\bG (\bS )^F$, $\w M=\NNN_\wbG (\bS )^F$, $\bC=\Cent_\bG (\bS)$ and $\w\bC=\Cent_\wbG (\bS )$.


First we show that the map $\w\Omega\colon \Irr (\w G\mid\Irr_\lp (G))\to  \Irr (\w M\mid\Irr_\lp (M))$ defined in \cite[\S 6]{CabSpaeth2} is compatible with Brauer induction (as required in Theorem~\ref{thm2_7ii}). 
In \cite[7.1]{CabSpaeth} a bijection of a similar nature with such a property has been constructed.

In the following let $\w\calM_M$ and  $\w\calM_G$ be the sets of parameters and the associated maps $\psi^{(G)}\colon \w\calM_G\to\Irr (\wG\mid\Irr_\lp (G))$ and $\psi^{(M)}\colon \w\calM_M\to\Irr (\w M \mid\Irr_\lp (M))$ defined as in \cite[\S 6]{CabSpaeth2} and inducing parametrizations of the sets of characters by rational classes of $\w\calM_M$ and  $\w\calM_G$ (see \cite[6.3]{CabSpaeth2}).

\begin{prop} \label{prop_block}  Let $(s,\la , \eta )\in\calM_M$. Moreover let $B\in \Bl(\w G)$ and  $b\in\Bl(\w M)$, such that $\psi^{(G)}{(\bS,s,\la ,\eta)} \in\Irr (\w G)$ belongs to $B$ and $\psi ^{(M)}{(s,\la ,\eta)}\in\Irr (\w M)$ belongs to $b$. 
Then \begin{enumerate}
\item $\bC =\Cent_\bG (\Zent(\bC )^F_\ell )$ and $\w\bC^F =\Cent_\wbG (\Zent(\w\bC )^F_\ell )^F$  

\item $B$ and  $b$ have the same Brauer correspondent. 
\end{enumerate}
\end{prop}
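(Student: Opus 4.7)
The plan is to handle (i) first by elementary torus considerations in type $\tA$, and then use it to set up a Brauer-theoretic comparison for (ii) via a common block of $\w\bC^F$.

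For (i), the inclusion $\bC \subseteq \Cent_\bG(\Zent(\bC)^F_\ell)$ is immediate from $\Zent(\bC)^F_\ell \subseteq \bC$. For the reverse inclusion, I would use the explicit description of the centralizer of a Sylow $\Phi_d$-torus in type $\tA$: the group $\w\bC$ is a Levi subgroup of $\w\bG$ of the form $\prod_i \GL_{a_i d}(\o\FF_q)$, with $\bS$ the product of the Sylow $\Phi_d$-tori of the centers $\Zent(\GL_{a_i d})$. Because $d$ is the multiplicative order of $q$ modulo $\ell$ (respectively modulo $4$ when $\ell = 2$), the subgroup $\Zent(\w\bC)^F_\ell$ contains an element whose $\w\bG$-centralizer is already $\w\bC$: concretely, an $\ell$-power diagonal matrix whose scalar blocks along the decomposition of $\w\bC$ are pairwise distinct is regular in the required sense. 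Intersecting with $\bG$ then yields $\bC = \Cent_\bG(\Zent(\bC)^F_\ell)$, and taking $F$-fixed points gives the rational version for $\w\bC^F$.

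For (ii), I would build on (i) as follows. Since $d$ is the order of $q$ mod $\ell$, a Sylow $\ell$-subgroup of $\w\bC^F$ is also Sylow in $\wM$ and in $\wG$; choose such a $D \in \Syl_\ell(\w\bC^F)$ as a common defect group. By (i), $\Zent(\w\bC)^F_\ell \subseteq \Zent(D)$, and using the explicit product structure of $D$ in type $\tA$ this subgroup is characteristic in $D$ (being an invariant direct factor of $\Zent(D)$); hence any $g \in \NNN_{\wG}(D)$ normalizes $\Zent(\w\bC)^F_\ell$, therefore normalizes $\Cent_{\w\bG}(\Zent(\w\bC)^F_\ell) = \w\bC$ by (i), and finally normalizes its unique Sylow $\Phi_d$-torus $\bS$. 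It follows that $\NNN_{\wG}(D) \subseteq \wM$, so that $\NNN_{\wG}(D) = \NNN_{\wM}(D)$ and the two Brauer correspondents live in the same ambient group.

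To conclude, I would inspect the parametrizations of \cite[\S 6]{CabSpaeth2}: the characters $\psi^{(G)}(\bS,s,\la,\eta)$ and $\psi^{(M)}(s,\la,\eta)$ are both constructed from a common character of $\w\bC^F$ attached to $(s,\la)$ (via Deligne--Lusztig induction in the first case and Clifford extension through the relative Weyl group in the second), so $B$ and $b$ both cover the same block $c \in \Bl(\w\bC^F)$. Applying the Harris--Kn\"orr theorem \cite[9.28]{Navarro} to the normal inclusion $\w\bC^F \lhd \wM$, and using transitivity of block induction along $\w\bC^F \leq \wM \leq \wG$ via $c^{\wM} = b$ and $c^{\wG} = B$, the Brauer correspondents $\beta \in \Bl(\NNN_{\wG}(D))$ of $B$ and $\beta' \in \Bl(\NNN_{\wM}(D))$ of $b$ both cover the Brauer correspondent $\gamma \in \Bl(\NNN_{\w\bC^F}(D))$ of $c$; as blocks of the common group $\NNN_{\wG}(D) = \NNN_{\wM}(D)$ with the same defect group and covering the same block of $\NNN_{\w\bC^F}(D)$, they must coincide. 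The main obstacle will be the clean identification of this common block $c$ under $B$ and $b$: this amounts to tracing through the parametrizations of \cite{CabSpaeth2} and verifying that the Jordan decomposition of blocks is compatible with the passage from $\w\bC^F$ to $\wM$ through its relative Weyl group action, as well as the fact that the induced blocks $c^{\wM}$ and $c^{\wG}$ are defined and behave as expected in this maximal-defect setting.
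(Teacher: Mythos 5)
Your treatment of (a) is in the spirit of the paper's (the paper appeals to the explicit description of $\bC$, together with \cite[22.17.(ii)]{CabEng} for $\ell$ odd and \cite{Eng} for $\ell=2$, plus connectedness of centralizers of toral subgroups in $\GL_n$ and $\SL_n$). The issue is in part (b), where your setup contains a false step that the rest of the argument then relies on.

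You assert that ``a Sylow $\ell$-subgroup of $\w\bC^F$ is also Sylow in $\wM$ and in $\wG$''. This is wrong in general. By \cite[3.4(4)]{BrMa} it is $\wM=\NNN_\wbG(\bS)^F$, not $\w\bC^F=\Cent_\wbG(\bS)^F$, that contains a Sylow $\ell$-subgroup of $\wG$; the relative Weyl group $\NNN_\wbG(\bS)^F/\Cent_\wbG(\bS)^F$ typically has order divisible by $\ell$. Concretely for $\GL_n(q)$ with $d=1$ (so $\ell\mid q-1$ and $\bC=\bT$ a maximal torus) one has $|\wM|_\ell = |\w\bT^F|_\ell\cdot|\Sym_n|_\ell$, which strictly exceeds $|\w\bC^F|_\ell=|\w\bT^F|_\ell$ as soon as $\ell\le n$. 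So the $D$ you pick is not a defect group of $\wG$ or of $\wM$, and your chain $\NNN_{\wG}(D)=\NNN_{\wM}(D)$ is founded on the wrong subgroup. Moreover, the auxiliary claim that $\Zent(\w\bC)^F_\ell$ is characteristic in $D$ ``being an invariant direct factor of $\Zent(D)$'' does not follow: a direct factor of the center of a $p$-group need not be characteristic, and you give no reason the relevant factor is preserved by $\Aut(D)$. Finally, it is not automatic that the blocks $B$ and $b$ in the statement have a common defect group at all, nor that such a defect group can be taken inside $\w\bC^F$: the characters $\psi^{(G)}(\bS,s,\la,\eta)$ lie over $\ell'$-degree characters of $G$ but need not have $\ell'$-degree themselves, so $B$ may have non-maximal defect in $\wG$. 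All of this would need to be sorted out before the Harris--Kn\"orr comparison via a common block $c\in\Bl(\w\bC^F)$ can be run. The paper sidesteps these issues entirely by invoking \cite[7.1]{CabSpaeth}, where the block-preservation of the bijection is established directly from the parametrization of \cite[\S 6]{CabSpaeth2}; if you want a self-contained argument, you should work with a defect group inside $\wM$ (not $\w\bC^F$) and track defect groups through the parametrization rather than assuming they sit in the centralizer of $\bS$.
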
 
 
\begin{proof} The first equality in (a) clearly implies the second. 

Proving $\bC =\Cent_\bG (\Zent(\bC )^F_\ell )$ is easy by describing explicitly $\bC$ and the action of $F$ on it (see proof of \cite[5.9]{Ma06} or the description of \cite[4.10.2]{GLS3}). Note also that when $\ell$ is odd, our claim is implied by \cite[22.17.(ii)]{CabEng}, and when $\ell = 2$ by Proposition~8 of \cite{Eng}, both in the form $\bC = \Cent^\circ_\bG (\Z (\bC )_\ell^F)$. This gives our claim since all centralizers of toral subgroups are connected in $\bG=\SL_n(\FF)$ and $\wbG:=\GL_n(\FF)$ (see \cite[14.16.(a)]{MalleTesterman}).

The considerations of \cite[7.1]{CabSpaeth} apply in this situation and prove part (b).
\end{proof}

Now in order to prove the assumption in Theorem~\ref{thm2_7_block} we apply the following general observation. 
\begin{prop}\label{CGCJ}
Let  $J\lhd \w G$, $B\in\Bl(J)$ and $D$ a defect group of $B$. If $\Cent_{\w G}(\Cent_J(D)_\lp) G=\w G$, then $\wG_B=\wG$. 
\end{prop}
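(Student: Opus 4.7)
The plan is to reduce to elements of $\Cent_{\wG}(\Cent_J(D)_\lp)$ and handle those via Osima's theorem together with the standard property of the Brauer homomorphism. In the context where Proposition~\ref{CGCJ} will be applied (namely condition \ref{thm2_7_block} of Theorem~\ref{thm2_7}), one has $G\leq J\leq \wG$, so every $g\in G$ acts on $J$ by an inner automorphism of $J$ and hence fixes every block of $J$; thus $G\subseteq \wG_B$. Combined with the hypothesis $\wG=\Cent_{\wG}(\Cent_J(D)_\lp)\,G$, this reduces the proposition to showing $\Cent_{\wG}(\Cent_J(D)_\lp)\subseteq \wG_B$.

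Taking any $c\in\Cent_{\wG}(\Cent_J(D)_\lp)$, I would denote by $\sigma_c\colon j\mapsto c^{-1}jc$ the induced automorphism of the normal subgroup $J$; this permutes $\Bl(J)$, and the conjugate block $\sigma_c(B)$ has central character $\lambda_B\circ\sigma_c^{-1}$ on $Z(\oFF_\ell J)$. So $\sigma_c(B)=B$ amounts to
\[
\lambda_B(\Cl_J(x)^+) \;=\; \lambda_B(\Cl_J(c^{-1}xc)^+) \qquad\text{for every class } \Cl_J(x),
\]
and Osima's theorem---which writes each block idempotent as an $\oFF_\ell$-linear combination of $\ell'$-class sums---further reduces the verification to $x\in J_\lp$.

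The crucial input will then be the following property of the Brauer homomorphism $\operatorname{Br}_D\colon Z(\oFF_\ell J)\to Z(\oFF_\ell \Cent_J(D))$: since $D$ is a defect group of $B$, $\operatorname{Br}_D(e_B)\neq 0$, whereas $\operatorname{Br}_D(\Cl_J(x)^+)=0$ whenever $\Cl_J(x)\cap \Cent_J(D)=\emptyset$. Applying $\operatorname{Br}_D$ to $\Cl_J(x)^+\,e_B=\lambda_B(\Cl_J(x)^+)\,e_B$ forces $\lambda_B(\Cl_J(x)^+)\neq 0$ to imply that $\Cl_J(x)$ contains some $x'\in \Cent_J(D)_\lp$; the hypothesis on $c$ then yields $c^{-1}x'c=x'$, so $\Cl_J(c^{-1}x'c)=\Cl_J(x')=\Cl_J(x)$ and the required equality holds. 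Since the bijection $\calC\mapsto c^{-1}\calC c$ on $\ell'$-classes of $J$ fixes every class on which $\lambda_B$ is nonzero, it must also permute the classes on which $\lambda_B$ vanishes among themselves, covering the remaining case. The substantive content is thus entirely in Osima's theorem and the Brauer-homomorphism fact, both classical and internal to $J$; the main thing to watch is that no additional hypothesis on the pair $(J,\wG)$ is required to invoke them.
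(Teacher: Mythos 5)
Your proof is correct and is a close variant of the paper's argument rather than a fundamentally different route; I describe the difference because it is still structurally interesting. Both arguments rest on Osima's theorem together with the fact that, $D$ being a defect group of $B$, the central character $\lambda_B$ can be nonzero on the class sum of an $\ell$-regular class only if that class meets $\Cent_J(D)$. They differ in where this is invoked. The paper first chooses $b\in\Bl(\Cent_J(D)D)$ with $b^J=B$ (Navarro, Thm.~4.14); since every $\ell$-regular element of $\Cent_J(D)D$ lies in $\Cent_J(D)$, an element $c\in\Cent_{\wG}(\Cent_J(D)_\lp)$ fixes all $\ell$-regular classes of $\Cent_J(D)D$ pointwise, so $\lambda_{b^c}=\lambda_b$ on $\ell$-regular class sums and $b^c=b$ by Osima (\cite[Ex.~4.5]{Navarro}), whence $B^c=(b^c)^J=b^J=B$. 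You instead stay inside $J$ and use the Brauer homomorphism $\mathrm{Br}_D$ directly: from $\mathrm{Br}_D(e_B)\neq 0$ you conclude that any $\ell$-regular class of $J$ on which $\lambda_B$ does not vanish meets $\Cent_J(D)_\lp$, hence is fixed by $c$-conjugation, and Osima then gives $e_B^c=e_B$. Same ingredients, different bookkeeping; the paper localizes at $\Cent_J(D)D$ and induces back, while you read off the class-fixing property at the level of $J$. You also usefully make explicit a step the paper leaves tacit: the reduction from $\wG=\Cent_\wG(\Cent_J(D)_\lp)G$ to elements of $\Cent_\wG(\Cent_J(D)_\lp)$ needs $G\leq\wG_B$, which you justify from $G\leq J$ (true in the only application, Proposition~\ref{wG_b}); the paper's proof uses this without comment and, incidentally, writes $G$ where $J$ is meant in several places.
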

\begin{proof}
 Let $b\in\Bl(\Cent_G(D)D)$ be a block with $b^G=B$. Such a block exists according to Theorem 4.14 of \cite{Navarro}. Let $c\in \Cent_{\w G}(\Cent_G(D)_{\lp})$. Then the homomorphisms $\la_{b^c}: \Z\bigl(\o\FF_\ell (\Cent_G(D)D)\bigr)\rightarrow \o\FF_\ell$ and $\la_b:\Z\bigl(\o\FF_\ell (\Cent_G(D)D)\bigr)\rightarrow \o\FF_\ell$ associated to $b$ and $b^c$ as in \cite[Section 3]{Navarro} coincide on the class sums of $\ell$-regular elements. According to \cite[Ex. 4.5]{Navarro} this implies $b^c=b$. For every $g\in \w G$ there exists some $c\in \Cent_G(\Cent_{\w G}(D)_\lp)$ with $g G=cG$ because of the assumption 
$\Cent_{\w G}(\Cent_G(D)_{\lp}) G=\w G$. This element $c$ satisfies $B^g=B^c$. But on the other hand  $(b^G)^c=(b^c)^G$. The latter block $(b^c)^G$ coincides with $b^G$. This implies $B^c=B$. 
\end{proof}

\begin{prop}\label{prop3_5} Let $Q$ be a Sylow $\ell$-subgroup of $G$ normalizing $\bS$ (see \cite[3.4(4)]{BrMa}). Then $\Cent_{\w G}(\bS)$ contains $\Cent_{\w G}(Q)_{\ell '}$ and $\Cent_{\w G} (\Cent_{\w G}(\bS))G = \w G$.
 \end{prop}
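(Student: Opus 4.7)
For the inclusion $\Cent_{\w G}(Q)_{\ell'}\subseteq\Cent_{\w G}(\bS)$, I plan to apply Proposition~\ref{prop_block}(a) to rewrite $\Cent_{\w G}(\bS)=\w\bC^F=\Cent_{\w G}(\Z(\w\bC)^F_\ell)$, reducing the task to showing that every $\ell'$-element $\w x\in\Cent_{\w G}(Q)$ centralizes the $\ell$-subgroup $\Z(\w\bC)^F_\ell$. The Brou\'e--Malle choice of $Q$ via \cite[3.4(4)]{BrMa} ensures that $Q\cap\bC^F\in\Syl_\ell(\bC^F)$, so in particular $\Z(\bC)^F_\ell\leq Q$. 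In type~$\tA$ the Levi decomposes as $\w\bC=\bC\cdot\Z(\w\bG)^\circ$ as algebraic groups, with finite intersection $\bC\cap\Z(\w\bG)^\circ\subseteq\mu_n$; a Lang--Steinberg argument on the short exact sequence $1\to\bC\cap\Z(\w\bG)^\circ\to\bC\times\Z(\w\bG)^\circ\to\w\bC\to 1$ then lets me write any element of $\Z(\w\bC)^F_\ell$ as a product of an element in $\Z(\bC)^F_\ell\leq Q$ and an element in $(\Z(\w\bG)^\circ)^F_\ell\leq\Z(\w G)$. Since $\w x$ centralizes both $Q$ and $\Z(\w G)$, it centralizes $\Z(\w\bC)^F_\ell$, yielding the inclusion.

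For the equality $\Cent_{\w G}(\Cent_{\w G}(\bS))\cdot G=\w G$, the Zariski density of $\w\bC^F$ in $\w\bC$ (standard for the finite groups of Lie type in type~$\tA$) gives $\Cent_{\w G}(\w\bC^F)=\Cent_{\w\bG}(\w\bC)^F=\Z(\w\bC)^F$, so the claim reduces to $\Z(\w\bC)^F\cdot G=\w G$, i.e.\ to the surjectivity of the natural map $\Z(\w\bC)^F\to\w G/G$. Since $\Z(\w\bG)^\circ\subseteq\Z(\w\bC)^\circ$ and $\bG\cdot\Z(\w\bG)^\circ=\w\bG$, one has a surjection $\Z(\w\bC)^\circ\twoheadrightarrow\w\bG/\bG\cong\GG_m$ whose kernel $\bG\cap\Z(\w\bC)^\circ$ is a closed subgroup of the torus $\Z(\w\bC)^\circ$. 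From the explicit type-$\tA$ description of the Levi $\w\bC$ coming from the cycle-type of the Weyl element defining $\bS$, I would verify that this kernel is actually a connected subtorus; Lang--Steinberg applied to $1\to\bG\cap\Z(\w\bC)^\circ\to\Z(\w\bC)^\circ\to\w\bG/\bG\to 1$ then furnishes the surjection $(\Z(\w\bC)^\circ)^F\twoheadrightarrow(\w\bG/\bG)^F=\w G/G$, whence $\Z(\w\bC)^F\cdot G=\w G$ follows a fortiori.

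The main obstacle will be in the first claim: the control of $\Z(\w\bC)^F_\ell$ relative to $\Z(\bC)^F_\ell$ requires careful management of the finite kernel $\bC\cap\Z(\w\bG)^\circ\subseteq\mu_n$ when passing to $F$-fixed points, the difficulty being most pronounced when $\ell\mid n$, so as to ensure that the $\ell$-part of $\Z(\w\bC)^F$ is really split between $\bC^F$ and $\Z(\w G)$. Once this decomposition of the $\ell$-part is secured, both claims follow from the two preceding arguments.
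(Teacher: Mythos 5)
The second half of your plan (the factorization $\Cent_{\w G}(\Cent_{\w G}(\bS))\cdot G=\w G$) is essentially the paper's argument: one writes $\wbG=\Cent_{\wbG}(\bC)\bG$, observes that $\bG\cap\Cent_{\wbG}(\bC)=\Cent_\bG(\bC)=\Z(\bC)$ (using $\Cent_\bG(\bC)\leq\Cent_\bG(\bS)=\bC$), and invokes connectedness of $\Z(\bC)$ (the paper cites \cite[5.6(a)]{CabSpaeth2}) to apply Lang. Your route via Zariski density of $\w\bC^F$ to identify $\Cent_{\w G}(\Cent_{\w G}(\bS))$ with $\Z(\w\bC)^F$ is a minor variant; the paper only needs the inclusion $\Cent_{\wbG}(\bC)^F\leq\Cent_{\w G}(\Cent_{\w G}(\bS))$, so it never proves equality.

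Your first half, however, has a genuine gap, and it is exactly the one you flag. Applying Lang--Steinberg to
$1\to\bC\cap\Z(\wbG)^\circ\to\bC\times\Z(\wbG)^\circ\to\w\bC\to 1$
does \emph{not} yield surjectivity on $F$-fixed points, because the kernel $\bC\cap\Z(\wbG)^\circ\cong\mu_n$ is finite, hence disconnected when $n>1$. The cokernel of $\bC^F\times(\Z(\wbG)^\circ)^F\to\w\bC^F$ is governed by $H^1(F,\mu_n)$, which is cyclic of order $\gcd(n,q-\ep)$ and so has nontrivial $\ell$-part precisely when $\ell\mid\gcd(n,q-\ep)$ --- the case you worry about. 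There is no way to salvage the decomposition $\Z(\w\bC)^F_\ell\subseteq Q\cdot\Z(\w G)$ by this route in general.

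The paper sidesteps all of this by never invoking $\Z(\w\bC)^F_\ell$. It uses only the $G$-level statement $\bC=\Cent_\bG(\Z(\bC)^F_\ell)$ from Proposition~\ref{prop_block}(a): since $\Z(\bC)^F_\ell\leq Q$, one has $\Cent_{\w G}(Q)\leq\Cent_{\w G}(\Z(\bC)^F_\ell)$, and because $\wbG=\bG\Z(\wbG)$ the latter centralizer in $\wbG$ equals $\Cent_\bG(\Z(\bC)^F_\ell)\cdot\Z(\wbG)=\bC\Z(\wbG)=\w\bC$, hence $\Cent_{\w G}(\Z(\bC)^F_\ell)=\w\bC^F=\Cent_{\w G}(\bS)$. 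No Lang--Steinberg, no finite kernel, no decomposition of $\Z(\w\bC)^F_\ell$. This is the observation you should adopt: centralizing $\Z(\bC)^F_\ell$ already forces you into $\w\bC$, without any need to compare $\Z(\w\bC)^F_\ell$ with $Q$ and $\Z(\w G)$.
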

 
 \begin{proof} Denote $\bC:=\Cent_\bG (\bS )$ which is an $F$-stable Levi subgroup of $\bG$. Note that $Q$ normalizes $\bC$ and its centre, so we clearly have  $Q\geq\Z (\bC )_\ell^F$. Then our first statement is obviously a consequence of $\bC = \Cent_\bG (\Z (\bC )_\ell^F)$ which is contained in Proposition~\ref{prop_block}(a) above. 
 
For the last statement, we prove $\Cent_{\w\bG} (\Zent (\wbG )\bC )^F\bG^F=\w\bG^F$ which is slightly stronger than our claim. We have $\w\bG =  \Cent_{\w\bG} (\bC )\bG$ since $\w\bG =\Z (\w\bG )\bG$. Then we get our claim as a standard consequence of Lang's theorem (see for instance \cite[8.1.(i)]{CabEng}) as soon as we check that $\bG\cap\Cent_{\w\bG} (\bC )$ is connected. The latter is $\Z (\bC )$ since $\Cent_\bG (\bC )\leq \Cent_\bG ( \bS)=\bC$. Then our claim is contained in the fact that $\Zent (\bC)$ is connected for which we refer to the proof of \cite[5.6(a)]{CabSpaeth2}. \end{proof}

\begin{prop}\label{wG_b} If $G\leq J\leq \wG$ and $b\in\Bl(J)$ has a defect group $\w D$ such that $\w D\cap G$ is a Sylow $\ell$-subgroup of $G$, then $\wG_b=\wG$.

 \end{prop}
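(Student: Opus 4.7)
The plan is to deduce the statement as a direct application of Proposition~\ref{CGCJ} together with Proposition~\ref{prop3_5}. First note that $J\lhd \wG$ since $\wG/G$ is abelian (cyclic, in fact), so Proposition~\ref{CGCJ} is applicable to any $B\in \Bl (J)$. So it suffices to verify
\[
\Cent_{\wG}\bigl(\Cent_J(\w D)_{\ell'}\bigr)\,G=\wG.
\]

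Next I would make a reduction so that $Q:=\w D\cap G$ normalizes our fixed Sylow $\Phi_d$-torus $\bS$. By hypothesis $Q$ is a Sylow $\ell$-subgroup of $G$, and by \cite[3.4(4)]{BrMa} some Sylow $\ell$-subgroup of $G$ normalizes $\bS$. Hence by Sylow's theorem there exists $g\in G$ with $Q^g\leq \NNN_G(\bS)$. Replacing $b$ by $b^g$ changes $\wG_b$ into $g^{-1}\wG_b g$, so $\wG_b=\wG$ if and only if $\wG_{b^g}=\wG$. After this conjugation we may therefore assume that $Q$ itself normalizes $\bS$, and correspondingly normalizes $\bC =\Cent_\bG (\bS)$ and its centre.

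The hypothesis of Proposition~\ref{CGCJ} is then verified through the inclusion chain
\[
\Cent_J(\w D)_{\ell'}\ \subseteq\ \Cent_J(Q)_{\ell'}\ \subseteq\ \Cent_{\wG}(Q)_{\ell'}\ \subseteq\ \Cent_{\wG}(\bS),
\]
where the first containment uses $Q\leq \w D$, and the last one is precisely the first assertion of Proposition~\ref{prop3_5} (recalling that by maximality $\Z(\bC)_\ell^F\leq Q$ since $Q$ normalizes $\Z(\bC)^F$, and then applying Proposition~\ref{prop_block}(a)). Taking $\Cent_{\wG}$ reverses these inclusions, giving
\[
\Cent_{\wG}\bigl(\Cent_J(\w D)_{\ell'}\bigr)\ \supseteq\ \Cent_{\wG}\bigl(\Cent_{\wG}(\bS)\bigr).
\]
Multiplying by $G$ and using the second assertion of Proposition~\ref{prop3_5}, namely $\Cent_{\wG}(\Cent_{\wG}(\bS))\,G=\wG$, yields the condition required by Proposition~\ref{CGCJ}, and hence $\wG_b=\wG$.

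I do not expect a real obstacle here, since the preceding three propositions have been designed for exactly this purpose; the only point requiring some care is the Sylow conjugation reduction, where one must check that stabilizer equality $\wG_b=\wG$ is preserved under conjugation by an element of $G$, which is immediate because $G\leq \wG$ and $\wG$ is normal in itself.
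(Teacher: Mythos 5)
Your proof is correct and follows essentially the same path as the paper: apply Proposition~\ref{CGCJ}, reduce by conjugation so that the relevant Sylow $\ell$-subgroup normalizes $\bS$, and then chain the inclusions from Proposition~\ref{prop3_5} to verify the centralizer condition. The only (harmless) difference is that you conjugate $Q=\w D\cap G$ by an element of $G$ whereas the paper conjugates $\w D$ itself, and you are a little more careful in keeping the $\ell'$-part throughout the inclusion chain.
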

 
 \begin{proof} We apply Proposition~\ref{CGCJ}. So it suffices to check that $\Cent_\wbG (\Cent_J(\w D)_\lp )^F \bG^F =\wbG^F$. Let $\bS$ be a Sylow $\Phi_d$-torus of $(\bG ,F)$.
 We know that $\NNN_\wbG (\bS )^F$ contains a Sylow $\ell$-subgroup of $\wbG^F$ (see \cite[3.4(4)]{BrMa}), so we can assume that $\w D$ normalizes $\bS$. Now Proposition~\ref{prop3_5} gives $\Cent_J(\w D)_\lp\leq \Cent_\wG (\w D\cap G)\leq \Cent_\wbG (\bS )^F$, so $\Cent_\wbG (\Cent_J(\w D)_\lp )^F\bG^F\geq \Cent_\wbG (\Cent_\wG (\bS))^F\bG^F$.
 The latter is $\wG$ by Proposition~\ref{prop3_5}.
  \end{proof}

\section{Proof of Theorem \ref{thm_PSL_ist_AM_gut}}\label{EndProof}

We now conclude our proof of Theorem \ref{thm_PSL_ist_AM_gut}. More precisely, we prove

\begin{theorem}\label{thm_PSL_ist_AM_gut+}
The inductive AM-condition from Definition \ref{def_rel_AM} above holds for the simple groups $S\in\{\PSL_n(\pm q)\}$ with respect to their Sylow $\ell$-subgroups, where $\ell$ is any prime.
\end{theorem}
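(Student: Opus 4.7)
The plan is to deduce Theorem~\ref{thm_PSL_ist_AM_gut+} by verifying the five hypotheses of Theorem~\ref{thm2_7} in the setting $G = \SL_n(\ep q)$, $\wG = \GL_n(\ep q)$, with $D$ a Sylow $\ell$-subgroup of $G$, $M = \NNN_\bG(\bS)^F$ for $\bS$ a Sylow $\Phi_d$-torus of $(\bG,F)$ as in Section~\ref{sec_blocks}, and $E$ the group of outer automorphisms assembled in \cite{CabSpaeth2} so that $\wG\rtimes E/\Z(\wG) \cong \Inn(G)\Aut(G)_D$. Using \cite[3.4(4)]{BrMa} I would arrange $D \leq \NNN_G(\bS) \leq M$, so that $\NNN_G(D) \leq M \neq G$ and $M$ is $\Aut(G)_D$-stable by construction. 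The finitely many exceptional covers of $\PSL_n(\ep q)$ occurring for small $n,q$ would be handled directly by hand, as in \cite{CabSpaeth2}; below I focus on the generic case where $G$ is the universal covering group of $S$.

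Hypothesis (i) of Theorem~\ref{thm2_7} packages structural and character-extendibility properties already established in \cite{CabSpaeth2}; the transition from the McKay set $\Irr_\lp$ to the block-theoretic $\Irr_0$ is harmless because in the maximal-defect case $\Irr_0(G\mid D) \subseteq \Irr_\lp(G)$, and similarly for $M$. For (ii) I would take $\w\Omega_D := \w\Omega$, the $\NNN_{\wG E}(D)$-equivariant bijection of \cite[\S6]{CabSpaeth2}; the only new requirement is the block equation $\bl(\w\Omega(\chi))^\wG = \bl(\chi)$, which is precisely Proposition~\ref{prop_block}(b), while the central-character and $\Irr(\wG/G)$-compatibility statements are built into the construction in loc.\ cit. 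Hypotheses (iii) and (iv), about stabilizer splittings and the extendibility of $\chi_0, \psi_0$ to $G E_{\chi_0}$ and $M(G\rtimes E)_{D,\psi_0}$ respectively, are also proved in \cite{CabSpaeth2} and do not involve block theory. Hypothesis (v) is exactly Proposition~\ref{wG_b}. With (i)--(v) in hand, Theorem~\ref{thm2_7} produces the inductive AM-condition of Definition~\ref{def_rel_AM} with respect to $D\Z(G)/\Z(G)$, which is the assertion.

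The main obstacle is hypothesis (ii): although the bijection $\w\Omega$ exists from \cite{CabSpaeth2}, it was constructed there without any reference to blocks. The key ingredient is Proposition~\ref{prop_block}(a), which identifies $\bC = \Cent_\bG(\bS)$ with $\Cent_\bG(\Z(\bC)^F_\ell)$; this aligns the local structure controlling the Brauer correspondence on the $\wG$- and $\wM$-sides, so that characters matched under the $(s,\la,\eta)$-parameterisation of \cite[\S6]{CabSpaeth2} belong to blocks in Brauer correspondence. Combined with Proposition~\ref{wG_b}, which rules out $\wG/G$-twist ambiguities when descending from $\w\Omega$ on $\Irr (\wG \mid \Irr_\lp (G))$ to the sought bijection $\Omega_D$ on $\Irr_0(G\mid D)$, this controls the block-preservation and completes the verification of~(ii), after which the rest of the argument is a direct appeal to Theorem~\ref{thm2_7}.
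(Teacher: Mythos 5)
Your proposal correctly identifies the main engine of the proof — verifying the five hypotheses of Theorem~\ref{thm2_7} for $G=\SL_n(\ep q)$, $\wG=\GL_n(\ep q)$, $M=\NNN_\bG(\bS)^F$ — and it correctly pinpoints Proposition~\ref{prop_block}(b) and Proposition~\ref{wG_b} as the new block-theoretic inputs supplementing the bijection $\w\Omega$ from \cite[\S 6]{CabSpaeth2}. This matches the paper's argument in Sect.~\ref{sec_blocks} and the final paragraph of Sect.~\ref{EndProof}.

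However, there are two genuine gaps, each corresponding to a whole family of cases the paper treats separately and that your proposal silently excludes by working inside the $\Phi_d$-torus framework.

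First, the defining characteristic case $\ell=p$ (Sect.~\ref{DefCha} of the paper). When $\ell \mid q$ the Sylow $\ell$-subgroup is unipotent and $\NNN_G(D)$ is a Borel subgroup, so the group $M=\NNN_\bG(\bS)^F$ attached to a Sylow $\Phi_d$-torus is simply not the right local subgroup (indeed $d$ is not even defined). The paper handles this via the proof of \cite[Thm.~8.4]{Spaeth_AM_red}; your argument never addresses this and cannot, because Proposition~\ref{prop_block} and Proposition~\ref{wG_b} live in the setting $\ell\neq p$.

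Second, the exceptional Sylow normalizer cases (Sect.~\ref{n=ell= 2,3}, Propositions~\ref{iAM2} and \ref{iAM3}): $\PSL_2(q)$ with $\ell=2$, and $\PSL_3(\ep q)$ with $\ell=3$ and $\ep q \equiv 4,7 \bmod 9$. These are \emph{infinite} families, not the finitely many small exceptional covers you set aside. In these cases the description in \cite[5.14]{Ma06} fails and $\NNN_G(D)$ is \emph{not} contained in $\NNN_\bG(\bS)^F$, so the hypothesis ``$\NNN_G(D)\leq M$'' of Theorem~\ref{thm2_7}(i) is violated for your chosen $M$, and your claim that this containment holds ``by construction'' is false there. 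The paper instead appeals to the cyclic-defect result \cite[1.1]{KoshitaniSpaeth2}, to \cite[9.6]{Spaeth_Dade} for the quaternion case, and to a direct hand check for $\PSL_3$ with $\ell=3$ based on the single relevant $3$-block (Lemma~\ref{3_block}). Without these cases dealt with, the proof of the theorem is incomplete.

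A smaller point: in the generic case, even after excluding the above, the containment $\NNN_G(D)\leq M$ is not merely ``by construction''; it is precisely the content of \cite[5.14]{Ma06} (combined with \cite[3.4(4)]{BrMa}), which the paper cites at the start of Sect.~6.4 and which you should invoke explicitly.
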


The proof uses the criterion given in Theorem~\ref{thm2_7} but we must review the exceptions in low ranks where the choice of $G$ or $M$ is not the generic one. This can be caused by exceptional Schur multipliers. On the other hand the structure of the normalizer of the Sylow subgroup can be non generic in low rank. Then we apply the results of Sect.~\ref{sec_blocks}.


\subsection{Exceptional Schur multipliers.}\label{ExcMul}
 If $S=\PSL_n(\ep q)$ is simple and not among the groups \[ \{  \PSL_2(4) , \PSL_3(2)\cong \PSL_2(7)   ,  \PSL_2(9)  , \PSL_4(\pm 2) , \PSL_3(4) , \PSL_6(-2) ,  \PSL_4(-3)    \} ,\] its universal covering is the corresponding $G=\SL_n(\ep q)$ with the Schur multiplier of $S$ being a cyclic group of order $(n,q-\ep )$ (see \cite[Table 24.3]{MalleTesterman}). The eight cases listed above correspond to simple groups which have a so-called {\em exceptional} Schur multiplier. For them
 the inductive AM-condition from \cite[7.2]{Spaeth_AM_red}  has been verified for $S$ and all primes by Th. Breuer, see \cite{Breuer}. 
They could also be treated in the same spirit as \cite[\S 7]{CabSpaeth2}.

So we may now assume that $n$ and $\ep q$ are such that $\PSL_n(\ep q)$ is simple and $\SL_n(\ep q)$ is its universal covering group.

\subsection{Defining characteristic.}\label{DefCha} We assume that $\ell $ divides $q$. Then a Sylow $p$-subgroup is the unipotent upper triangular subgroup of $G$ whose normalizer is the upper triangular subgroup of $G$. We take the latter as $M$. If $S$ has non-exceptional Schur multiplier then the proof of Theorem~8.4 in \cite{Spaeth_AM_red} applies since the assumption that $p\geq 5$ is assumed there only to ensure that the Schur multiplier of $S$ is non-exceptional.

\subsection{Exceptional Sylow normalizers.}\label{n=ell= 2,3} 
In the next step we consider the cases where the description of  Sylow subgroups and their normalizers given by \cite[5.14]{Ma06} does not apply, i.e., that the normalizers of the Sylow subgroups are not related to the normalizers of Sylow $\Phi_d$-tori (see the above Section \ref{sec_blocks}) as in the general case.
This concerns essentially special cases with the primes $\ell = 2$ for PSL$_2$ and $\ell =3$ for PSL$_3$. 

 \begin{prop}\label{iAM2} 
Assume $S=\PSL_2(  q)$ is a simple group. Then $S$ satisfies the inductive AM-condition from \cite[7.2]{Spaeth_AM_red} with respect to any prime.
 
 \end{prop}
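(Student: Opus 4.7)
The plan is to combine the reductions already set up in the previous sections with a direct verification in the exceptional case $\ell=2$. By Section~\ref{ExcMul} we may assume $q\notin\{4,7,9\}$, so that $G=\SL_2(q)$ is the universal covering group of $S$. The defining-characteristic case $\ell=p$ is covered by Section~\ref{DefCha}. For odd $\ell\neq p$, Sylow $\ell$-subgroups of $G$ are cyclic and lie inside the $F$-fixed points of a Sylow $\Phi_d$-torus $\bS$ of $\bG=\SL_2$; one then applies Theorem~\ref{thm2_7} with $M:=\NNN_\bG(\bS)^F$, a proper dihedral subgroup of $G$. Its hypotheses are routine to check: (v) is the content of Proposition~\ref{wG_b}, the Brauer condition in (ii) follows from Proposition~\ref{prop_block}, and the extendibility assertions (iii)--(iv) reduce to standard arguments because both $\w G/G$ and $\w M/M$ are cyclic of order dividing $q-\epsilon$, exactly as in \cite[Section~6]{CabSpaeth2}.

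The substantive case is $\ell=2$ with $q$ odd. Here the Sylow $2$-subgroup $P$ of $G$ is generalized quaternion, $\NNN_G(P)$ is either $P$ itself (when $q\equiv\pm 1\pmod 8$) or isomorphic to $\SL_2(3)$ (when $q\equiv\pm 3\pmod 8$), and $P$ is not contained in $\NNN_\bG(\bS)^F$ for any Sylow $\Phi_d$-torus of $\bG$. I would bypass Theorem~\ref{thm2_7} and verify Definition~\ref{def_rel_AM} directly with $M:=\NNN_G(P)$. The key structural fact is that $\Aut(G)\cong\PGL_2(q)\rtimes\langle F_p\rangle$ acts transparently on both $\Irr_0(G\mid P)$ and $\Irr_0(M\mid P)$ through the familiar action on semisimple classes and on roots of unity; the required $\Aut(G)_P$-equivariant bijection $\Omega_P$ is then built by pairing each $2$-block of maximal defect of $G$ with its Brauer correspondent in $M$ and matching the characters inside them by direct inspection of the character tables of $\SL_2(q)$ and of $\NNN_G(P)$.

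The main obstacle will be the cohomological clause (c)(iv) of Definition~\ref{def_rel_AM}. I would reduce it via Lemma~\ref{red}: choose $J$ with $\o G\leq J\lhd A$ such that $J/\o G$ is a Hall $2'$-subgroup of $A/\o G$, which is possible since $A/\o G$ is metacyclic (in fact abelian) in this low-rank setting; the results of \cite{KoshitaniSpaeth} recalled in Section~\ref{sec2+} then yield a character $\w\phi\in\Irr(A[b])$ extending $\o\chi$ and realizing the required block equality over the $2'$-Hall, and Lemma~\ref{red} propagates the block identity from $A[b]$ back to $A$. The residual checks compare the values of $\w\chi$ and $\w\chi'$ on a short list of $2$-regular elements, and are feasible because all the characters involved have degree at most $q+1$.
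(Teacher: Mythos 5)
Your proposal takes a genuinely different and much more labor-intensive route than the paper, and it also contains a gap in scope.

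The paper's own proof is essentially a two-line reduction. For odd $\ell$ (with $\ell\nmid q$), every $\ell$-subgroup of $G=\SL_2(q)$ is cyclic, so every defect group is cyclic, and the paper invokes \cite[1.1]{KoshitaniSpaeth2}, which disposes of the inductive AM-condition for blocks with cyclic defect groups once and for all. For $\ell=2$, the paper observes that non-cyclic $2$-subgroups of $G$ are self-centralizing, so by Brauer's third main theorem the only block with non-cyclic defect is the principal block, whose defect group is the generalized quaternion Sylow $2$-subgroup; that single case is then quoted from \cite[9.6]{Spaeth_Dade}, and the cyclic-defect $2$-blocks are again handled by \cite{KoshitaniSpaeth2}. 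No application of Theorem~\ref{thm2_7} and no direct manipulation of projective representations is needed. Your plan of applying Theorem~\ref{thm2_7} for odd $\ell$ and building $\Omega_P$ by hand for $\ell=2$ is therefore not what the paper does, and while it may be salvageable, it trades short citations for substantial unverified work.

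The genuine gap: Proposition~\ref{iAM2}, unlike Theorem~\ref{thm_PSL_ist_AM_gut} and Proposition~\ref{iAM3}, asserts the \emph{full} inductive AM-condition of \cite[7.2]{Spaeth_AM_red}, i.e., for every radical $\ell$-subgroup of $S$, not just the Sylow. Your argument only ever treats the Sylow case (a Sylow $\Phi_d$-torus normalizer for odd $\ell$; the Sylow $2$-normalizer for $\ell=2$). It does not address blocks with smaller, non-trivial cyclic defect groups — which do occur — and these are precisely the cases the paper covers via \cite{KoshitaniSpaeth2}. In addition, for $\ell=2$ your handling of condition (c)(iv) via Lemma~\ref{red} presupposes that one already has the character $\w\phi\in\Irr(A[b])$ with the required block-compatibility over $J[b]$; producing such a character is the content of Theorem C of \cite{KoshitaniSpaeth} and is not something the lemma gives you for free, so that part of your sketch is not yet a proof. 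Finally, note that once one knows that only the principal block has non-cyclic defect, there is only a single $2$-block to treat, so the ``pairing each $2$-block of maximal defect with its Brauer correspondent'' is a correspondence between singletons.
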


 \begin{proof} We check the inductive AM-condition with respect to the prime $\ell$.
 From the above, we may assume that the covering group of $S$ is the corresponding $G=\SL_2(q)$ and that $\ell\nmid q$. In this case note that any $\ell$-subgroup of $G$ for an odd $\ell$ is cyclic. We then appeal to \cite[1.1]{KoshitaniSpaeth2} to get our statement. When $\ell =2$, the Sylow $2$-subgroup of $G$ is generalized quaternion, and the non-cyclic 2-subgroups contain their own centralizers, so by Brauer's third main theorem (\cite[6.7]{Navarro}), only the principal $2$-block has non-cyclic defect, in fact maximal defect. Our claim is then proved in \cite[9.6]{Spaeth_Dade} as a by-product of the checking of the inductive condition for the Dade Conjecture in this special case.
 \end{proof} 

 \begin{prop}\label{iAM3}  Assume $S=\PSL_3(\ep q)$ is a simple group with $\ep q=4$ or $7$ mod 9. Then $S$ satisfies the inductive AM-condition for the prime $\ell =3$ with respect to any Sylow $3$-subgroup of $S$.
 \end{prop}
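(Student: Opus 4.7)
The plan is to verify the criterion of Theorem~\ref{thm2_7} with $G=\SL_3(\ep q)$, $\wG=\GL_3(\ep q)$, $\ell=3$ and $D$ a Sylow $3$-subgroup of $G$. The hypothesis $\ep q\equiv 4$ or $7\pmod 9$ is equivalent to $3\mid (\ep q-1)$ while $9\nmid(\ep q-1)$, so $|\Z(G)|_3=3$ and the $3$-part of a Sylow $\Phi_d$-torus $\bS$ of $(\bG ,F)$ (with $d=1$ if $\ep=1$, $d=2$ if $\ep=-1$) has order $9$. First I would describe $D$ explicitly as $D=(\bS^F)_3\rtimes\langle w\rangle$, where $w\in\NNN_G(\bS)^F$ is a rational $3$-cycle permuting the three eigenspaces of $\bS$; a short matrix calculation then shows that $D$ is extraspecial of order $27$ and exponent $3$, with centre $\Z(G)$. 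In particular $D/\Z(G)$ is elementary abelian of order $9$, and $M:=\NNN_G(D)=\NNN_G(\bS)^F$ is a small, explicitly computable subgroup of $G$.

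Once $D$ and $M$ are fixed, I would verify the hypotheses (i)--(v) of Theorem~\ref{thm2_7} in turn. Assumption~(i) is routine from the explicit description of $D$, $M$ and the semidirect product $\wG\rtimes E$, where $E$ is generated by the appropriate field automorphisms of $G$ as in the generic case of \cite[\S 4]{CabSpaeth2}. Assumption~(v) is immediate from Proposition~\ref{wG_b} since any $\w D\leq J$ with $\w D\cap G=D$ has $\w D\cap G$ Sylow in $G$. For assumptions~(iii) and~(iv) I would exploit the fact that $\Out(S)$ has a cyclic Sylow $3$-subgroup, arising solely from field automorphisms through the factor $\log_p(q)_3$; the relevant inertia quotients in $\wG E$ are therefore cyclic, and the extensibility of the chosen $\chi_0\in\Irr(G\mid D)$ and $\psi_0\in\Irr(M\mid D)$ to their stabilizers follows automatically.

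The main obstacle is assumption~(ii): the construction of the $\NNN_{\wG E}(D)$-equivariant block-preserving bijection $\Omegau_D\colon\calG\to\calM$. The bijection from \cite[\S 6]{CabSpaeth2} already produces an $\NNN_{\wG E}(D)$-equivariant map at the level of ordinary characters that is compatible with multiplication by $\Irr(\wG/G)$, so what remains is the block-preservation condition of Theorem~\ref{thm2_7}(ii). Here I would invoke Proposition~\ref{prop_block}, whose key point reduces to the identity $\bC=\Cent_\bG(\Z(\bC)^F_3)$ for $\bC=\Cent_\bG(\bS)$. In the present small-rank regime this can be verified by the same direct computation as in the proof of Proposition~\ref{prop_block}(a), using the connectedness of centralizers of toral subgroups in $\bG=\SL_3$ and $\wbG=\GL_3$; once established, the Harris--Kn\"orr argument of Section~\ref{sec3_1} transports the block compatibility from $\Omegau_D$ down to the desired $\Omega_D$, and the proof concludes as in Section~\ref{sec3_2}.
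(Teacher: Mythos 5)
Your proposal has the right general shape — apply the criterion of Theorem~\ref{thm2_7} with $G=\SL_3(\ep q)$, $D$ the extraspecial Sylow $3$-subgroup, $M=\NNN_G(D)$ — and your explicit description of $D$ as $3^{1+2}_+$ with centre $\Z(G)$ is correct. But the central step fails: you assert that $M=\NNN_G(D)=\NNN_G(\bS)^F$ and then invoke Proposition~\ref{prop_block} (hence the parametrization of \cite[\S 6]{CabSpaeth2}, Proposition~\ref{wG_b}, and the Harris--Kn\"orr transport) to handle the block conditions. This is exactly what does \emph{not} work here. The case $\PSL_3$, $\ell=3$, $3\parallel(q-\ep)$ is placed in Section~\ref{n=ell= 2,3} precisely because the description \cite[5.14]{Ma06} fails: the Sylow $3$-normalizer is not the $\Phi_d$-torus normalizer. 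Indeed $\NNN_G(\bS)^F$ has the normal abelian subgroup $\bS^F$ of index $6$, while $\NNN_G(D)$ has $D$ normal with $\NNN_G(D)/D$ embedding into $\operatorname{Sp}_2(3)$ as a $3'$-group and acting with no invariant line on $D/\Z(D)$, so its largest abelian normal subgroup is $\Z(G)$ of order $3$; the two groups cannot coincide. Once $M\neq\NNN_\bG(\bS)^F$, the machinery of Sections~\ref{sec_blocks} and~\ref{sec3_1} is no longer available, and the core of your argument collapses.

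The paper takes an entirely different route. It uses $\wG$ equal to the index-$3$ overgroup of $G$ inside $\GL_3(\ep q)$ (not the full $\GL_3$), the $\NNN_G(D)$-version of $M$, the explicit equivariant bijection $\Irr_{3'}(G)\to\Irr_{3'}(M)$ constructed by Malle in \cite[3.3]{M08a} for precisely these exceptional cases, together with the extension properties proved there. Most importantly it exploits Lemma~\ref{3_block}: since $\Cent_G(D)\leq D$, Brauer's third main theorem forces every $3$-block of $\NNN_G(D)$ to be principal, and every height-zero character of $G$ then lies in the principal block as well. With everything in the principal block, the block-compatibility requirements of Theorem~\ref{thm2_7} (items~\ref{Omega_u_epsilon_equiv2_block} and~\ref{thm2_7_block}) become trivial, again by Brauer's third main theorem. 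Your proposal contains no analogue of this observation, which is what makes the block conditions tractable without the generic apparatus. Finally, your dismissal of assumptions~(iii) and~(iv) via ``$\Out(S)$ has a cyclic Sylow $3$-subgroup'' is unjustified: the diagonal automorphisms contribute a $\ZZ/3$ and the field automorphisms contribute $\ZZ/f_3$ where $q=p^f$, so the Sylow $3$-subgroup of $\Out(S)$ is $\ZZ/3\times\ZZ/f_3$, which is not cyclic when $3\mid f$; the required extendibility must instead be read off from \cite[3.3]{M08a}.
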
 
 
 Let $D$ be a Sylow $3$-subgroup of $G=\SL_3(\ep q)$, which can be taken as a subgroup of monomial matrices with $|D|= 27$.

 \begin{lem}\label{3_block}  All elements of $\Irr_{3 '}(G)$ and all elements of $\Irr (\NNN_G(D))=\Irr _{3'}(\NNN_G(D))$ are in the principal $3$-block.
 \end{lem}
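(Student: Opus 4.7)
The plan is to prove both assertions by first showing that the principal block $B_0(G)$ is the unique $3$-block of $G=\SL_3(\ep q)$ of maximal defect, and then transporting this conclusion to $\NNN_G(D)$ via the Brauer main theorems.

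I would begin with the $\ell$-Jordan decomposition of Brou\'e--Michel (see also Fong--Srinivasan and Cabanes--Enguehard), which labels the $3$-blocks of $G$ by $(G^*)^F$-orbits of pairs $(s,\kappa)$, where $s$ is a semisimple $3$-regular element of $G^*=\PGL_3(\ep q)$ and $\kappa$ is a unipotent ingredient of $\Cent_{G^*}(s)^F$; such a block has maximal defect iff $\Cent_{G^*}(s)$ contains a Sylow $3$-subgroup $P^*$ of $G^*$. Under $\ep q\equiv 4,7\pmod 9$ one has $|G^*|_3=27$, and an explicit computation in $\PGL_3(\ep q)$ shows that $P^*$ is extraspecial of exponent $3$ with $\Cent_{G^*}(P^*)=\Z(P^*)\cong C_3$, whose only $3$-regular element is the identity; hence $s=1$ is forced. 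Next, the unipotent characters of $\SL_3(\ep q)$ are indexed by the three partitions of $3$, each of empty $3$-core, so by Fong--Srinivasan they all lie in a common $3$-block, which contains the trivial character and is therefore the principal block. Combined, $B_0(G)$ is the unique maximal-defect $3$-block of $G$, so $\Irr_{3'}(G)=\bigsqcup_{B\,\text{max def}} \Irr_0(B)\subseteq \Irr(B_0(G))$.

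For $\NNN_G(D)$, Brauer's first main theorem provides a defect-preserving bijection between the $3$-blocks of $G$ with defect group $D$ and those of $\NNN_G(D)$ with defect group $D=\Syl_3(\NNN_G(D))$, and Brauer's third main theorem matches the principal blocks. Hence $B_0(\NNN_G(D))$ is the unique maximal-defect $3$-block of $\NNN_G(D)$, and in particular $\Irr_{3'}(\NNN_G(D))\subseteq \Irr(B_0(\NNN_G(D)))$. To obtain $\Irr(\NNN_G(D))=\Irr_{3'}(\NNN_G(D))$, I would use that $\Z(D)=\Z(G)$ and $D/\Z(D)\cong C_3\times C_3$ is abelian: then $\NNN_G(D)/\Z(G)$ has an abelian normal Sylow $3$-subgroup, so by Ito--Michler every character of $\NNN_G(D)/\Z(G)$ has $3'$-degree; a Clifford analysis over $\Z(G)$ then matches these, under inflation, with the irreducible characters of $\NNN_G(D)$ relevant to the principal block.

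The main obstacle is this last Clifford-theoretic identification: a priori the two faithful $3$-dimensional characters of the extraspecial $D$ would lift to characters of $\NNN_G(D)$ of degree divisible by $3$ and with nontrivial restriction to $\Z(G)$, and one has to verify carefully, via the explicit structure of $\NNN_G(D)$ as an extension of $D$ by a Weyl-type involution from $\Sym_3$, that the statement of the lemma genuinely holds under the specific hypothesis $\ep q\equiv 4,7\pmod 9$.
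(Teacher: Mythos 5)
Your proof takes a genuinely different and much heavier route than the paper. The paper's whole argument is two lines: since $D$ is an extraspecial Sylow $3$-subgroup of $G=\SL_3(\ep q)$ consisting of monomial matrices, $\Cent_G(D)\leq \Z(G)\leq D$; hence $D\lhd \NNN_G(D)$ is a self-centralizing normal Sylow subgroup, so $\NNN_G(D)$ has a unique $3$-block (the principal one), and Brauer's third main theorem then forces every maximal-defect block of $G$ — in particular $\bl(\chi)$ for any $\chi\in\Irr_{3'}(G)$ — to be principal. Your route through $\ell$-Jordan decomposition of blocks, a case analysis of semisimple $3'$-elements in $\PGL_3(\ep q)$, empty $3$-cores, and It\^o--Michler is a sledgehammer for this nail; moreover, for $\SL_n$ the block-theoretic Jordan decomposition is delicate because $\Z(\bG)$ is disconnected (one normally has to descend from $\GL_n$), so that first step is not as off-the-shelf as you present it. The simpler $\Cent_G(D)\leq D$ observation would also have saved you from needing the explicit computation in $\PGL_3(\ep q)$.

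As for the ``main obstacle'' you flag at the end: you are right to be worried, and in fact the obstruction is genuine. Since $D$ is extraspecial of order $27$ and exponent $3$, it has two irreducible characters of degree $3$, which are exactly the ones nontrivial on $\Z(D)=\Z(G)$. Any $\chi\in\Irr(\NNN_G(D))$ lying over one of these has degree divisible by $3$, so the literal equality $\Irr(\NNN_G(D))=\Irr_{3'}(\NNN_G(D))$ written into the lemma statement does \emph{not} hold, and no Clifford analysis over $\Z(G)$ will save it. What the paper's proof actually establishes — and what is used downstream in Proposition~\ref{iAM3} — is only that \emph{every} $3$-block of $\NNN_G(D)$ is principal (equivalently, all of $\Irr(\NNN_G(D))$ lies in one block) and that $\Irr_{3'}(G)\subseteq\Irr(B_0(G))$. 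Your instinct that the stated set-equality cannot be proved from the structure of $\NNN_G(D)$ is therefore correct; the error is in the displayed statement, not in your analysis, and you should not have spent effort trying to force it through It\^o--Michler.
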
 

\begin{proof} 
One has $\Cent_G(D)\leq D$, so any 3-block of $\Cent_G(D)$ is the principal block, and therefore any 3-block of $\NNN_G(D)$ is the principal block by Brauer third main theorem, see \cite[6.7]{Navarro}. On the other hand, any element of $\Irr_{3 '}(G)$ has to be in a $3$-block of maximal defect. Then it is the principal block again by Brauer third main theorem and what has been said about $\NNN_G(D)$.
\end{proof}

\renewcommand{\proofname}{Proof of Proposition \ref{iAM3}}
 \begin{proof}
 As explained in Sect. \ref{ExcMul}, we can assume that $S=\PSL_3(\ep q)$ has $G=\SL_3(\ep q)$ as covering group. The assumption on $\ep q$ implies that $q-\ep$ is divisible by 3 but not by 9, so we take $G <\wG\leq \GL_3(\ep q)$ with $\wG/G$ the subgroup of order 3. Then $\wG$ induces the action of $\GL_3(\ep q)$ on $G$ since the subgroup of order $q-\ep$ of $\FF_{q^2}^\times$ has a Sylow 3-subgroup of order 3. Let $E$ be the subgroup of $\Aut (G)$ generated by field automorphisms and transpose-inverse composed with conjugacy by the matrix of the transposition $(1,3)$ as in \cite[\S 3.1, 3.2]{M08a}.

We apply Theorem~\ref{thm2_7} with $D$ the Sylow $3$-subgroup of $G$ used above and $M=\NNN_ G(D)$.  The set $\Irr_{3'}(G)$ has six characters, the first three being unipotent - hence restrictions of unique unipotent characters of $\wG$ - and the three others are components of the restriction of some element of $\Irr (\wG )$ which is their common $\Ind^\wG_G$. All extend to their stabilizer in $\w G$ thanks to \cite[3.3]{M08a}. The set $\Irr_{3'}(M)$ follows the same pattern and one has a $(\wG E)_D = \w M E$-equivariant bijection $\Omega\colon \Irr_{3'}(G)\to \Irr_{3'}(M)$ constructed by Malle in \cite[3.3]{M08a}. 

One can now easily define a map $\w\Omega\colon\Irr (\wG\mid\Irr_{3'}(G))\to\Irr (\w M\mid\Irr_{3'}(M))$ between sets with 10 elements, equivariant for multiplication by linear characters of $\w G/G\cong \w M/M$ and $E$-action. Both sets are inside the principal block since it is the only 3-block covering the unipotent block (apply for instance \cite[9.6]{Navarro}), so we have the second block condition of Theorem~\ref{thm2_7}. The other block requirement is ensured by Brauer's third Main Theorem.
\end{proof}

\renewcommand{\proofname}{Proof}

{}
\subsection{End of proof of Theorem 1.1} We now conclude that any simple group of type $S=\PSL_n(\ep q)$ satisfies the inductive AM-condition for $\ell$-blocks with maximal defect for any prime $\ell$. We can assume $\ell\nmid q$ by Sect. \ref{DefCha} above, and that $\bG^F=\SL_n(\ep q)$ is the universal covering of $S$ by Sect. \ref{ExcMul}. Recall that we denote by $d\geq 1$ the order of $q$ mod $\ell$ when $\ell$ is odd, mod $4$ when $\ell = 2$. By \cite[5.14]{Ma06} and Sect. \ref{n=ell= 2,3} above, we may assume that the Sylow $\Phi_d$-torus $\bS $ of $\bG$ is such that $M:=\NNN_\bG (\bS )^F$ satisfies the group theoretic requirements of Theorem~\ref{thm2_7} with $\w M=\NNN_\wbG (\bS )^F$ and $D$ any Sylow $\ell$-subgroup of $M$. 

Using \cite[\S 6]{CabSpaeth2}, we have a bijection $\w\Omega\colon \Irr (\w G\mid\Irr_\lp (G))\to \Irr (\w M\mid\Irr_\lp (M))$ which satisfies the character theoretic requirements of Theorem~\ref{thm2_7}. There remains to check the equations on blocks given in \ref{thm2_7ii} and \ref{thm2_7_block}. The first one is implied by the above Proposition~\ref{prop_block}. The requirement of Theorem~\ref{thm2_7_block} is ensured by Proposition~\ref{wG_b}.

This completes our proof.
 
\def\cprime{$'$} \def\cprime{$'$} \def\cprime{$'$}

\end{document}